\newtheorem{theorem}{Theorem}[section]
\newtheorem{proposition}[theorem]{Proposition}
\newtheorem{corollary}[theorem]{Corollary}
\newtheorem{lemma}[theorem]{Lemma}
\newtheorem{conjecture}[theorem]{Conjecture}
\newtheorem{hypothesis}[theorem]{Hypothesis}
\newtheorem{Example}[theorem]{Example}
\newtheorem{Remark}[theorem]{Remark}
\newenvironment{remark}{\begin{Remark}\rm}{\end{Remark}}
\newtheorem{Remarks}[theorem]{Remarks}
\newtheorem{Question}[theorem]{Question}
\newcommand{\al}{\alpha}
\newcommand{\be}{\beta}
\newcommand{\ga}{\gamma}
\newcommand{\Ga}{\Gamma}
\newcommand{\de}{\delta}
\newcommand{\eps}{\varepsilon}
\newcommand{\la}{\lambda}
\newcommand{\La}{\Lambda}
\newcommand{\om}{\omega}
\newcommand{\Om}{\Omega}
\newcommand{\si}{\sigma}
\newcommand{\Si}{\Sigma}
\newcommand{\ze}{\zeta}
\let\cal=\mathcal
\let\Bbb=\mathbb
\begin{document}

\title[Prime pairs 15.05.2007]{Prime pairs and zeta's
zeros}

%\keywords{prime-pair conjecture, Wiener--Ikehara theorem}

\subjclass[2000]{Primary: 11P32; Secondary:
11M26}

\date{May 15, 2007}

\author{Jacob Korevaar}
\thanks{The author wishes to thank Fokko van de Bult for
numerical support, Jan van de Craats for the drawings
and G\'{e}rald Tenenbaum for useful comment.}

\begin{abstract}
There is extensive numerical support for the
prime-pair conjecture (PPC) of Hardy and
Littlewood (1923) on the asymptotic behavior of
$\pi_{2r}(x)$, the number of prime pairs $(p,\,p+2r)$ with
$p\le x$. However, it is still not known whether there are
infinitely many prime pairs with given even difference!
Using a strong hypothesis on (weighted) equidistribution of
primes in arithmetic progressions, Goldston, Pintz and
Yildirim have recently shown that there are infinitely many
pairs of primes differing by at most sixteen. The present
author uses a Tauberian approach to derive that the PPC is
equivalent to specific boundary behavior of certain
functions involving zeta's complex zeros. Under Riemann's
Hypothesis (RH) and on the real axis these functions
resemble pair-correlation expressions. A speculative 
extension of Montgomery's classical work (1973) would imply 
that there must be an abundance of prime pairs.
\end{abstract}

\maketitle

\setcounter{equation}{0}    
\section{Introduction} \label{sec:1}
As of today, it is not known whether there are
infinitely many prime twins $(p,\,p+2)$, or prime pairs
$(p,\,p+2r)$ with given $r>0$. However, using a 
hypothesis on (weighted) equidistribution of primes in
arithmetic progressions, Goldston, Pintz and Yildirim
\cite{GPY05} have recently shown that there are infinitely
many pairs of primes differing by at most sixteen; see also
Goldston, Motohashi, Pintz and Yildirim \cite{GM06} and the
exposition by Soundararajan \cite{So07}. Let
$$\pi_{2r}(x) =\{\#\,\mbox{prime
pairs}\;(p,\,p+2r)\;\mbox{with}\; p\le x\}.$$ 
Around 1920 Viggo Brun used what is now called Brun's
sieve to prove that $\pi_2(x)=\cal{O}(x/\log^2 x)$.
In 1923 Hardy and Littlewood published a
long paper \cite{HL23} on the Goldbach problems
and on prime pairs, prime triplets, etc. For prime pairs
they conjectured the asymptotic formula
\begin{equation} \label{eq:1.1}
\pi_{2r}(x)\sim 2C_{2r}{\rm
li}_2(x)=2C_{2r}\int_2^x\frac{dt}{\log^2 t}\sim
2C_{2r}\frac{x}{\log^2 x}
\end{equation}
as $x\to \infty$. Here 
\begin{equation} \label{eq:1.2}
C_2 = \prod_{p\,{\rm
prime},\,p>2}\,\left\{1-\frac{1}{(p-1)^2}\right\}
\approx 0.6601618,
\end{equation}
and
\begin{equation} \label{eq:1.3}
C_{2r} = C_2\prod_{p|r,\,p>2}\,\frac{p-1}{p-2}.
\end{equation}
Thus, for example, $C_4=C_8=C_2$, $C_6=2C_2$,
$C_{10}=(4/3)C_2$. There is a great deal of numerical
support for the prime-pair conjecture (PPC). On the Internet
one finds counts  of prime twins for $p$ up to $5\cdot
10^{15}$ by T.\ R.\ Nicely \cite{Ni05}. In Amsterdam Fokko
van de Bult \cite{Bu07} has recently counted the prime pairs
$(p,\,p+2r)$ with $2r\le 10^3$ and $p\le
x=10^3,\,10^4,\,\cdots,\,10^8$. Table $1$ is based on
his work. The bottom line shows (rounded) values $L_2(x)$
of the comparison function $2C_2{\rm li}_2(x)$. The table
supports the conjecture that for every $r$ and $\eps>0$
\begin{equation} \label{eq:1.4}
\pi_{2r}(x)-2C_{2r}{\rm li}_2(x)\ll x^{(1/2)+\eps}.
\end{equation}
Here the symbol $\ll$ is shorthand for the
$\cal{O}$-notation.

\begin{table} \label{table:1}
\begin{tabular}{llllllll} 
$2r\backslash x$ & $10^3$ & $10^4$ & $10^5$ & $10^6$ & $10^7$
& $10^8$ & $C_{2r}/C_2$ \\
      &     &       &        &        &        &         &     \\
2     & 35  & 205   & 1224   & 8169   & 58980  & 440312  & 1   \\ 
4     & 41  & 203   & 1216   & 8144   & 58622  & 440258  & 1   \\ 
6     & 74  & 411   & 2447   & 16386  & 117207 & 879908  & 2   \\
8     & 38  & 208   & 1260   & 8242   & 59595  & 439908  & 1   \\
10    & 51  & 270   & 1624   & 10934  & 78211  & 586811  & 4/3 \\
12    & 70  & 404   & 2421   & 16378  & 117486 & 880196  & 2   \\ 
14    & 48  & 245   & 1488   & 9878   & 70463  & 528095  & 6/5 \\
16    & 39  & 200   & 1233   & 8210   & 58606  & 441055  & 1   \\
18    & 74  & 417   & 2477   & 16451  & 117463 & 880444  & 2   \\
20    & 48  & 269   & 1645   & 10972  & 78218  & 586267  & 4/3 \\
22    & 41  & 226   & 1351   & 9171   & 65320  & 489085  & 10/9\\
24    & 79  & 404   & 2475   & 16343  & 117342 & 880927  & 2   \\ 
30    & 99  & 536   & 3329   & 21990  & 156517 & 1173934 & 8/3 \\
210   & 107 & 641   & 3928   & 26178  & 187731 & 1409150 & 16/5\\
      &     &       &        &        &        &         &     \\
$L_2(x)\,$: & 46 & 214 & 1249 & 8248   & 58754  & 440368 &     \\
       &     &       &        &        &        &        &
\end{tabular}
\caption{Counting prime pairs}
\end{table} 

Sieve methods have become an important part of prime-number
theory. Using an advanced sieve, Jie Wu \cite{Wu04} has
shown that $\pi_2(x)<6.8\,C_2\,x/\log^2x$ for all
sufficiently large $x$. The best result in  the other
direction is  J.\ R.\ Chen's \cite{Che73}: if $N(x)$ denotes the
number of primes $p\le x$ for which $p+2$ has at most two
prime factors, then $N(x)\ge cx/\log^2 x$ for some $c>0$.
There are related results for prime pairs $(p,\,p+2r)$. In
particular, for every $\eps>0$ there is an $x_0=x_0(\eps)$
independent of $r$ such that
\begin{equation} \label{eq:1.5}
\pi_{2r}(x)\le(8+\eps)C_{2r}\,x/\log^2x\quad\mbox{for
all}\;\;x\ge x_0;
\end{equation}
see the book Sieve Methods by Halberstam and Richert
\cite{HR74}. We will also use the fact that the prime-pair
constants $C_{2r}$ have {\it mean value one}, for which
Tenenbaum \cite{Te06} has proposed an elegant proof. There is
a strong estimate in the work of Bombieri and Davenport
\cite{BD66}, which was sharpened by Friedlander and Goldston
\cite{FG95} to
\begin{equation} \label{eq:1.6} 
S_m=\sum_{r=1}^m C_{2r}=m-(1/2)\log m+\cal{O}\{\log^{2/3}
(m+1)\}.
\end{equation}

Starting with Montgomery's work \cite{Mo73} one has
realized that there is a deep connection between the
prime-pair conjectures and the fine distribution of the
complex zeros of the zeta function. Goldston in California 
has been an important contributor to the subject, cf.\
\cite{GM87}, \cite{GGOS00}; several papers exploit the PPC
to obtain plausible results on zeta's zeros. 
Following a lead of Arenstorf \cite{Ar04} we will use a
Wiener--Ikehara theorem to study prime pairs; the two-way
form below is due to the author \cite{Ko05}.
\begin{theorem} \label{the:1.1}
Let $\sum_{n=1}^\infty a_n/n^{w}$ with $a_n\ge 0$ converge
to a sum function $f(w)$ for $w=u+iv$ with $u>1$. Then
\begin{equation} \label{eq:1.7}
\sum_{n\le x}\,a_n\,\sim\, Ax\quad\mbox{as}\;\;x\to\infty
\end{equation} 
if and only if for $u\searrow 1$, the difference
\begin{equation} \label{eq:1.8}
f(u+iv)-\frac{A}{u+iv}=g(u+iv)
\end{equation}
has a distributional limit $g(1+iv)$, which on every finite
interval $(-B,B)$ coincides with a pseudofunction (that
may {\rm a priori} depend on $B$). 
\end{theorem}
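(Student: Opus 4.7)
Let $S(x)=\sum_{n\le x}a_n$. Since $a_n\ge 0$, $S$ is non-decreasing. Set $T(y)=e^{-y}S(e^y)$, so that \eqref{eq:1.7} is equivalent to $T(y)\to A$ as $y\to\infty$. The plan is to express $g(u+iv)$ as, essentially, the Laplace transform of $T-A$, and to convert the two conditions of the theorem into equivalent statements about its distributional boundary value on the line $\mathop{\rm Re}w=1$. The distributional Wiener--Ikehara machinery of \cite{Ko05} then supplies the equivalence.

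The connection is made by Abel summation: for $u>1$,
$$f(w)=w\int_1^\infty S(x)\,x^{-w-1}\,dx=w\int_0^\infty T(y)\,e^{-(w-1)y}\,dy.$$
After subtracting the contribution of the constant $A$ (an elementary explicit piece that also absorbs the discrepancy between the stated $A/(u+iv)$ normalization and the standard $A/(w-1)$), one identifies $g(u+iv)$ with the Laplace transform of $T-A$ at $w-1$, modulo an analytic factor that is harmless near $u=1$. Its distributional limit as $u\searrow 1$ is therefore (up to that factor) the Fourier transform $\widehat{T-A}$, extended by zero to $y<0$.

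For the ``only if'' direction, assume $T(y)\to A$. Then $T-A$ is bounded and vanishes at infinity, and a standard Riemann--Lebesgue argument shows that its Fourier transform coincides on every finite interval $(-B,B)$ with a pseudofunction. This yields \eqref{eq:1.8} in the required form.

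The main obstacle is the ``if'' direction, which is genuinely Tauberian; here the positivity $a_n\ge 0$ — hence monotonicity of $S$ — is essential. I would follow the author's distributional Wiener--Ikehara argument. Choose a non-negative Schwartz function $\varphi$ of total mass $1$ with $\widehat\varphi$ compactly supported inside the prescribed $(-B,B)$, so that by Parseval
$$\int_{\mathbb R}\bigl(T(\tau+y)-A\bigr)\varphi(y)\,dy=\bigl\langle g(1+i\cdot),\,\widehat\varphi\,e^{i\tau\cdot}\bigr\rangle,$$
and the right side tends to $0$ as $\tau\to\infty$ by the defining Riemann--Lebesgue property of a pseudofunction paired with a fixed Schwartz test function. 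Thus the smoothed averages of $T-A$ vanish in the limit. Monotonicity of $S$ then lets one sandwich $T(\tau)$ between two such smoothed averages and upgrade averaged vanishing to pointwise vanishing, giving \eqref{eq:1.7}. The two delicate points to handle carefully are (i) that the pseudofunction representative of $g(1+iv)$ may depend on $B$, so the Parseval pairing must be performed entirely inside one chosen interval, and (ii) that the elementary analytic piece separating $g$ from the honest Fourier transform of $T-A$ must be tracked so that it contributes nothing to the boundary distribution.
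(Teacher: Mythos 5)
Your proposal is correct and follows essentially the same route as the paper's source for this result (the paper itself does not prove Theorem \ref{the:1.1} but quotes it from \cite{Ko05}): the Laplace-transform representation of $T(y)=e^{-y}\sum_{n\le e^y}a_n$, the definition of a pseudofunction for the easy direction, and Parseval pairing against peak functions with compactly supported Fourier transform combined with the monotonicity sandwich (including the preliminary bootstrap to $S(x)=\mathcal{O}(x)$) for the Tauberian direction. One caution: the discrepancy between the stated $A/(u+iv)$ and the standard $A/(w-1)$ is $A/\{w(w-1)\}$, which itself has a first-order pole at $w=1$ and hence cannot be ``absorbed'' into a harmless piece --- the statement's normalization is simply a typo for $A/(w-1)$, which is what your argument (correctly) uses.
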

Ikehara \cite{Ik31} and Wiener \cite{Wi32} obtained
(\ref{eq:1.7}) under the hypothesis that $g(w)$ has an
analytic or continuous extension to the half-plane $\{u\ge
1\}$. The condition $\sum_{n\le x}\,a_n=\cal{O}(x)$ would
ensure that $f(u+iv)$ and $g(u+iv)$ have a distributional
limit as $u\searrow 1$. A pseudofunction is the
distributional Fourier transform of a bounded function
which tends to zero at $\pm\infty$; locally, such a
distribution is given by trigonometric series with
coefficients that tend to zero. A pseudofunction cannot
have pole-type singularities. In the case $a_n\ge 0$, local
pseudofunction boundary behavior of $g(w)$ in (\ref{eq:1.8})
implies that
\begin{equation} \label{eq:1.9}
(w-w_0)g(w)\to 0
\end{equation} 
for angular approach of $w$ (from the right) to any point
$w_0$ on the line $\{u=1\}$; cf.\ \cite{Ko02},
or \cite{Ko04}, Theorem III.3.1.

\setcounter{equation}{0}    
\section{Basic auxiliary functions}
\label{sec:2}
For analytic formulation of the general PPC one may
introduce the sums
$$\theta_{2r}(x)=\sum_{p,\,p+2r\,{\rm prime};\;p\le
x}\,\log^2 p.$$
Relation (\ref{eq:1.1}) is equivalent to the asymptotic
formula
$$\theta_{2r}(x)\sim 2C_{2r}x\quad\mbox{as}\;\;x\to\infty.$$ 
By the Wiener--Ikehara theorem this relation holds
if and only if the function
$$\tilde D_{2r}(w)=\sum_{p,\,p+2r\,{\rm prime}}\,\frac{\log^2
p}{p^w}$$
can be written as $2C_{2r}/(w-1)+g_{2r}(w)$, where 
$g_{2r}(w)$ has `good boundary behavior' as $u\searrow 1$.

At this stage it is convenient to replace
$\theta_{2r}(x)$ and $\tilde D_{2r}(w)$ by functions with
similar behavior that involve von Mangoldt's function
$\La(n)$. We recall its generating Dirichlet series; using
the Euler product for $\ze(w)$, 
$$\sum_{n=1}^\infty\,\frac{\La(n)}{n^w}=-\frac{\ze'(w)}{\ze(w)}
=\sum_{p\,{\rm prime}}\,(\log p)
\bigg(\frac{1}{p^w}+\frac{1}{p^{2w}}+\cdots\bigg).$$
One has $\La(k)=\log p$ if
$k=p^\al$ with $p$ prime, and $\La(k)=0$ if $k$ is not a
prime power. Since there are only $\cal{O}(\sqrt x)$ prime
powers $p^\al\le x$ with $\al\ge 2$, the difference between
\begin{equation} \label{eq:2.1}
\psi_{2r}(x)\stackrel{\mathrm{def}}{=}
\sum_{n\le x}\,\La(n)\La(n+2r)
\end{equation}
and $\theta_{2r}(x)$ is not much larger than $\sqrt x$.
Thus the PPC is also equivalent to the relation
\begin{equation} \label{eq:2.2}
\psi_{2r}(x)\sim 2C_{2r}x\quad\mbox{as}\;\;x\to\infty.
\end{equation}
Similarly, the function 
\begin{equation} \label{eq:2.3}
D_{2r}(s)\stackrel{\mathrm{def}}{=}\sum_{n=1}^\infty
\,\frac{\La(n)\La(n+2r)}{n^s(n+2r)^s}\qquad(s=\si+i\tau,\,
\si>1/2)
\end{equation}
behaves in the same way as $\tilde D_{2r}(2s)$ when $2\si$
is close to $1$. Setting
\begin{equation} \label{eq:2.4}
D_{2r}(s)-\frac{C_{2r}}{s-1/2}=G_{2r}(s),
\end{equation}
the Wiener--Ikehara theorem with $2s$ instead of $w$
shows that the PPC (\ref{eq:2.2}) is equivalent to good
boundary behavior of $G_{2r}(s)$ as
$\si\searrow 1/2$. 

\medskip\noindent {\scshape Combinations}.
In order to profit from the fact that the constants $C_{2r}$
have mean value $1$ it helps to study sums 
$\sum_{2r\le\la}\,D_{2r}(s)$ for large values of $\la$.
Indeed, under the PPC, their boundary behavior should be
roughly like that of $(\la/2)/(s-1/2)$. In this spirit we
will study manageable combinations $V^\la(s)$ of functions
$D_{2r}(s)$ with nonnegative coefficients. They are derived
from a certain repeated complex integral $T^\la(s)$ (see
Section \ref{sec:5}) which extends and modifies an integral
of Arenstorf \cite{Ar04}. It involves a parameter
$\la>0$ and a parameter function $E^\la$; the
resulting formula for $V^\la(s)$ is
\begin{equation} \label{eq:2.5}
V^\la(s) \stackrel{\mathrm{def}}{=} 2\sum_{0<2r\le\la}
\,E^\la(2r)D_{2r}(s) = T^\la(s) - D_0(s) + H^\la(s).
\end{equation} 
Here the function $D_{2r}(s)$ is given by (\ref{eq:2.3}),
also when $r=0$, and $H^\la(s)$ is holomorphic for
$\si>0$. The parametric function
$E^\la(\nu)=E(\nu/\la)$ acts as a sieving device. The basic 
function $E(\nu)$ is taken even, with compact support,
Lipschitz continuous and decreasing on $[0,\infty)$. For
convenience $E(\nu)$ is normalized so that its support is
$[-1,1]$ and $E(0)=1$. The simplest sieving function
$E^\la(\nu)$ is given by the Fourier transform of the
Fej\'{e}r kernel for ${\Bbb R}$,
$$E^\la_F(\nu) = \frac{1}{\pi}\int_0^\infty\frac{\sin^2(\la
t/2)}{\la(t/2)^2}\cos \nu t\,dt =
\left\{\begin{array}{ll} 1-|\nu|/\la & \mbox{for
$|\nu|\le\la$,}\\  0 & \mbox{for $|\nu|\ge\la$.}
\end{array}\right.$$
This function is adequate if one is willing to use Riemann's
Hypothesis (RH) in the proof of the main theorem; cf.\ the
manuscript \cite{Ko07}. In the present paper we will prove
the main result without appealing to RH, but for that have to
require that $E$ be sufficiently {\it smooth}. More
precisely, we suppose that $E$, $E'$ and $E''$ are absolutely
continuous with $E'''$ of bounded variation. One could for
example use the Fourier transform of the Jackson kernel
for ${\Bbb R}$,
\begin{align} 
E^\la_J(\nu) &= 
\frac{3}{4\pi}\int_0^\infty\frac{\sin^4(\la t/4)}
{\la^3(t/4)^4}\cos \nu t\,dt
\notag \\ &=\left\{\begin{array}{ll}
1-6(\nu/\la)^2+6(|\nu|/\la)^3 & \mbox{for
$|\nu|\le\la/2$},\\ 2(1-|\nu|/\la)^3 & \mbox{for
$\la/2\le|\nu|\le\la$},\\ 0 & \mbox{for $|\nu|\ge\la$.}
\end{array}\right.\notag
\end{align} 

The PPC and the mean value $1$ of the constants $C_{2r}$ lead
one to expect that for large $\la$, $V^\la(s)$ has a
first-order pole at $s=1/2$ with residue
\begin{equation} \label{eq:2.6}
2\sum_{0<2r\le\la}\,E(2r/\la)C_{2r}\approx\la\int_0^1
E(\nu)d\nu \stackrel{\mathrm{def}}{=} A^E\la.
\end{equation}

For the following we need a Mellin transform associated
with the Fourier transform of the kernel $E^\la$:
\begin{equation} \label{eq:2.7}
M^\la(z)=M^\la_E(z) \stackrel{\mathrm{def}}{=}
\frac{1}{\pi}\int_0^\infty \hat E^\la(t)t^{-z}dt,\quad
-1<x={\rm Re}\,z<1.
\end{equation}
\begin{proposition} \label{prop:2.1}
For our smooth $E$, the Mellin transform has a meromorphic
extension to the half-plane $\{x>-3\}$, given by
\begin{equation} \label{eq:2.8}
M^\la(z) = \frac{2}{\pi}\la^z\Ga(-z-3)\sin(\pi z/2)
\int_0^{1+} \nu^{z+3}dE'''(\nu)=\la^zM(z),
\end{equation}
say. It has poles (of the first order) at $z=1,\,3,\,\cdots$.
The residue at $z=1$ is $-(2\la/\pi)A^E=-(2\la/\pi)\int_0^1
E(\nu)d\nu$ and $M^\la(0)=1$. For fixed $\la$ and any
constant $C$ one has the majorization
\begin{equation} \label{eq:2.9}
M^\la(x+iy)\ll(|y|+1)^{-x-7/2}\quad\mbox{for}\;\;-3<x\le
C,\;\; |y|\ge 1.
\end{equation}
\end{proposition}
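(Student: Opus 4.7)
The plan is to derive the closed form \eqref{eq:2.8} by combining the classical Mellin--cosine transform with four integrations by parts in the space variable $u$, and then to read off the analytic structure from the resulting expression. For $0<\mathrm{Re}\,z<1$ the identity $\int_0^\infty \cos(at)t^{-z}dt = a^{z-1}\Gamma(1-z)\sin(\pi z/2)$ ($a>0$, an Abel-regularised Euler integral; note $\cos(\pi(1-z)/2)=\sin(\pi z/2)$) applied to the defining formula $\hat E^\la(t)=2\int_0^\la E^\la(\nu)\cos(\nu t)d\nu$---with an $e^{-\eps t}$ convergence factor used to legitimise Fubini, followed by $\eps\searrow 0$ via dominated convergence---gives, after the substitution $u=\nu/\la$,
\[
M^\la(z) = \frac{2}{\pi}\la^z\Gamma(1-z)\sin(\pi z/2)\int_0^1 E(u)u^{z-1}du.
\]
I then integrate by parts four times in $u$. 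The first three steps use that $E,E',E''$ are absolutely continuous on $\mathbb{R}$ (via the compact-support extension), so the boundary values at $u=1$ vanish because $E(1)=E'(1)=E''(1)=0$, and those at $u=0$ vanish either from $E'(0)=0$ (evenness) or because the factor $u^{z+k}$ has $\mathrm{Re}(z+k)>0$. The fourth step is a Riemann--Stieltjes IBP with $E'''$ of bounded variation (extended by zero outside $[0,1]$, killing both boundary contributions), giving
\[
\int_0^1 E(u)u^{z-1}du = \frac{1}{z(z+1)(z+2)(z+3)}\int_{[0,1+]}u^{z+3}dE'''(u).
\]
Combining with the iterated $\Gamma$-recursion $\Gamma(-z-3)=\Gamma(1-z)/[z(z+1)(z+2)(z+3)]$ delivers exactly \eqref{eq:2.8}. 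The Stieltjes integral is entire in $z$ and bounded by $\mathrm{TV}(E''')$ whenever $\mathrm{Re}\,z>-3$, so this is a meromorphic extension to the stated half-plane.

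For the pole analysis, the poles of $\Gamma(-z-3)$ at the nonnegative integers are cancelled by the zeros of $\sin(\pi z/2)$ at the even integers, leaving candidate simple poles in $\{x>-3\}$ at $z\in\{-1,1,3,5,\ldots\}$. Two Stieltjes IBPs show $\int_{[0,1+]}u^2 dE'''(u)=2(E'(1)-E'(0))=0$, so the pole at $z=-1$ is removable and only the odd positive poles remain. For the residue at $z=1$, four IBPs give $\int_{[0,1+]}u^4 dE'''(u)=24\int_0^1 E(u)du=24A^E$; combined with $\mathrm{Res}_{z=1}\Gamma(-z-3)=-1/24$ and $\sin(\pi/2)=1$ this yields the residue $-(2\la/\pi)A^E$. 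The normalisation $M^\la(0)=1$ follows from $\int_{[0,1+]}u^3 dE'''(u)=6(E(0)-E(1))=6$ together with the expansion $\Gamma(-z-3)\sin(\pi z/2)\to\pi/12$ at $z=0$ (via $\Gamma(-z-3)\sim 1/(6z)$).

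The majorisation \eqref{eq:2.9} then follows from Stirling: as $|y|\to\infty$, $|\Gamma(-x-3-iy)|\sim\sqrt{2\pi}\,|y|^{-x-7/2}e^{-\pi|y|/2}$ and $|\sin(\pi(x+iy)/2)|\sim\tfrac12 e^{\pi|y|/2}$, so the exponential factors cancel exactly, while $|\la^z|=\la^x\le\la^C$ and $|\int_{[0,1+]}u^{z+3}dE'''(u)|\le \mathrm{TV}(E''')$ remain uniformly bounded on the strip $-3<x\le C$. The main obstacle is really just the bookkeeping of the four successive integrations by parts, especially the final Stieltjes step, where one must carefully choose the $[0,1+]$ endpoint convention so that the boundary contributions at $u=0$ and $u=1$ both vanish; this is the unique point in the proof where the BV hypothesis on $E'''$ (rather than a stronger smoothness condition) plays an essential role.
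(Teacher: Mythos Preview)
Your proof is correct and follows essentially the same route as the paper's: derive the intermediate form $M^\la(z)=\frac{2}{\pi}\la^z\Ga(1-z)\sin(\pi z/2)\int_0^1 E(\nu)\nu^{z-1}d\nu$ via the Mellin transform of cosine, then integrate by parts four times and absorb the factor $1/[z(z+1)(z+2)(z+3)]$ into $\Ga(-z-3)$ to obtain \eqref{eq:2.8}; the pole locations, the residue at $z=1$, the value $M^\la(0)=1$, and the Stirling-based estimate \eqref{eq:2.9} are then read off exactly as in the paper. Your treatment is slightly more explicit than the paper's in two places---the $e^{-\eps t}$ regularisation to justify Fubini, and the direct Stieltjes computation $\int_{[0,1+]}u^2\,dE'''=0$ to remove the apparent pole at $z=-1$ (the paper instead notes that the intermediate integral $\int_0^1\nu^{z+1}dE'(\nu)$ vanishes at $z=-1$)---but these are matters of presentation rather than a different argument.
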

\begin{proof}
The Fourier transform $\hat E^\la(t)$ is
$\cal{O}\{(|t|+1)^{-2}\}$. By (\ref{eq:2.7}), initially
taking $0<x<1$ and using the Mellin transform of $\cos\nu t$,
cf.\ Section \ref{sec:4},
\begin{align} 
M^\la(z) &= \frac{1}{\pi}\int_0^\infty
t^{-z}dt\cdot 2\int_0^\la E^\la(\nu)(\cos t\nu)d\nu
\notag \\ & =\frac{2}{\pi}\int_0^\la E^\la(\nu)d\nu
\int_0^{\infty-}(\cos\nu t)t^{-z}dt\notag\\ &=
\frac{2}{\pi}\Ga(1-z)\cos\{\pi(1-z)/2\}\int_0^\la
E^\la(\nu)\nu^{z-1}d\nu \notag \\ &=
\frac{2}{\pi}\la^z\Ga(1-z)\sin(\pi z/2)\int_0^1
E(\nu)\nu^{z-1}d\nu.\notag
\end{align}
For $x>0$ and smooth $E$, the final integral may also be
written as
\begin{align}
-\frac{1}{z}\int_0^1 \nu^zdE(\nu) &=
\frac{1}{z(z+1)}\int_0^{1}\nu^{z+1}dE'(\nu) \notag \\ &=
\frac{1}{z(z+1)(z+2)(z+3)}\int_0^{1+}\nu^{z+3}dE'''(\nu).
\notag
\end{align}
This is enough to prove (\ref{eq:2.8}), hence $M^\la(z)$
has a  meromorphic extension to the half-plane $\{x>-3\}$.
The poles  of $\Ga(-z-3)$ at $z=-2,\,0,\,2,\,\cdots$ are
cancelled by zeros of $\sin(\pi z/2)$ and the pole at $z=-1$
is cancelled by the zero of $\int_0^1\nu^{z+1}dE'(\nu)$ at
that point. The formulas also show that
$M^\la(0)=1$ and that the residue at the pole $z=1$ is equal
to $-(2\la/\pi)\int_0^1 E(\nu)d\nu$. The order estimate
(\ref{eq:2.9}) follows from the standard inequalities
\begin{equation} \label{eq:2.10}
\Ga(z)\ll |y|^{x-1/2}e^{-\pi|y|/2},\quad\sin(\pi z/2)\ll
e^{\pi|y|/2} 
\end{equation}
which are valid for $|x|\le C$ and $|y|\ge 1$. The inequality
for $\Ga(z)$ follows from Stirling's formula for complex
$z$; see formula (\ref{eq:8.3}) below. 
\end{proof}

\setcounter{equation}{0}    
\section{Results}\label{sec:3}
Our results involve the complex zeros $\rho$ of the
zeta function. Taking multiplicities into account, the
zeros above the real axis will be arranged according to
non-decreasing imaginary part:
$$\rho=\rho_n=\be_n+i\ga_n,\;\;0<\ga_1\approx
14<\ga_2\approx 21\le\cdots,\;\;n=1,\,2,\,\cdots$$
(with $\be_n=1/2$ as far as zeros have been computed); we
write $\overline\rho_n=\rho_{-n}$. The theorem below involves
the sum
\begin{align} & \label{eq:3.1}
\Si^\la(s) \stackrel{\mathrm{def}}{=} 
\left\{\frac{\ze'(s)}{\ze(s)}\right\}^2 \notag \\ & \quad
+2\,\frac{\ze'(s)}{\ze(s)}\,
\sum_{\rho}\,\Ga(\rho-s)M^\la(\rho-s)
\cos\{\pi(\rho-s)/2\} \\ & 
+ \sum_{\rho,\,\rho'}\,\Ga(\rho-s)\Ga(\rho'-s) 
M^\la(\rho+\rho'-2s)\cos\{\pi(\rho-\rho')/2\},\notag
\end{align}
where $M^\la(\cdot)$ is given by (\ref{eq:2.7}). It is 
convenient to denote the sum of the first two terms by 
$\Si^\la_1(s)$ and to set the double sum equal to
$\Si^\la_2(s)$. Results from Section \ref{sec:2} show that
$\Si^\la_1(s)$ defines a meromorphic function for $\si<3$
whose only poles in the strip $\{0<\si<1\}$ occur at the
complex zeros of $\ze(\cdot)$. Under RH the double
series, in which $\rho$ and $\rho'$ both run over zeta's
complex zeros, is absolutely convergent for $1/2<\si<1$;
cf.\ Lemma \ref{lem:4.2} below. Without RH the double sum may
be interpreted as a limit of sums over the zeros $\rho$,
$\rho'$ with $|{\rm Im}\,\rho|$,
$|{\rm Im}\,\rho'|<R$; it will follow from Theorem
\ref{the:3.1} that the combination $\Si^\la(s)$ is in any
case holomorphic for $1/2<\si<1$.

The formula for $V^\la(s)$ in (\ref{eq:2.5}) contains the
function $T^\la(s)$ for which a repeated complex integral is
introduced in Section \ref{sec:5}. Moving the paths of
integration in this integral and using the residue theorem
one obtains
\begin{theorem} \label{the:3.1}
For any $\la>0$, any smooth sieving function $E$, and for
$s=\si+i\tau$ with $1/2<\si<1$ there are holomorphic
representations 
\begin{align}  \label{eq:3.2}
V^\la(s) &= 2\sum_{0<2r\le\la}\,E(2r/\la)D_{2r}(s) \notag \\
&= \frac{-1/4}{(s-1/2)^2}+\frac{A^E\la}{s-1/2}
+\Si^\la(s)+H^\la(s) \\ &=
\frac{A^E(\la-1)}{s-1/2}+\Si^\la(s)-\Si^1(s)+H^\la(s),
\notag \label{eq:3.2}
\end{align}
where $A^E=\int_0^1 E(\nu)d\nu$ and $\Si^\la(s)$ is given by
$(\ref{eq:3.1})$ (with proper interpretation of the double
sum); the various functions $H^\la(s)$ are analytic for
$1/2\le\si<1$, and for $1/4<\si<1$ under RH. On the interval
$\{1/2\le s\le 3/4\}$ one has $H^\la(s)=\cal{O}(\la\log\la)$
as $\la\to \infty$.
\end{theorem}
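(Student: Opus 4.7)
The plan is to begin from the identity $V^\la(s) = T^\la(s) - D_0(s) + H^\la(s)$ in (\ref{eq:2.5}) and to evaluate the repeated complex integral $T^\la(s)$ announced in Section \ref{sec:5} by displacing both of its contours leftward through the critical strip of $\ze$, invoking the residue theorem at each singularity that is swept across. The standard construction of $T^\la$ (from Arenstorf's Mellin-of-cosine device, adapted via the sieving kernel $E^\la$) puts two shifted copies of $-\ze'/\ze$, two $\Ga\cdot\cos(\pi\cdot/2)$ factors, and the weight $M^\la(z_1+z_2)$ of Proposition \ref{prop:2.1} under the double integral. Collecting residues and carrying what remains to a new holomorphic piece, absorbed into $H^\la$, will produce the two formulas in (\ref{eq:3.2}).

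I would next catalog the singularities that the two contours encounter as they move from $\{{\rm Re}\,z_j\}$ just greater than $1-\si$ down to a line inside the half-plane of holomorphy of the integrand: in each variable, the simple pole of $-\ze'/\ze(s+z_j)$ at $z_j=1-s$ and the simple poles at $z_j=\rho-s$ as $\rho$ runs over the complex zeros of $\ze$; jointly, the simple pole of $M^\la(z_1+z_2)$ at $z_1+z_2=1$, whose residue is $-(2\la/\pi)A^E$ by Proposition \ref{prop:2.1}. The $\Ga$ and cosine factors contribute no poles in the region being swept. Reading off the residues and matching them with (\ref{eq:3.1})--(\ref{eq:3.2}): the residue from the Mellin pole $z_1+z_2=1$ supplies $A^E\la/(s-1/2)$; the mixed residues---one variable at $1-s$, the other at $\rho-s$---produce the $(\ze'/\ze)^2(s)$ piece and the single sum $2(\ze'/\ze)(s)\sum_\rho\Ga(\rho-s)M^\la(\rho-s)\cos\{\pi(\rho-s)/2\}$ of $\Si^\la_1(s)$; the double residues at $(\rho-s,\rho'-s)$ assemble $\Si^\la_2(s)$. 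The double pole $-1/[4(s-1/2)^2]$ is not a residue contribution from $T^\la$ at all; it is the principal singularity of $-D_0(s)$ in (\ref{eq:2.5}), since prime-power contributions with exponent $\ge 2$ are holomorphic near $s=1/2$ and $\sum\La(n)\log n/n^{2s}\sim 1/[4(s-1/2)^2]$ there.

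What remains of the doubly shifted integral then defines the new $H^\la(s)$. Analyticity on $\{1/2\le\si<1\}$ follows once the final contours lie inside $\{{\rm Re}\,z_j>-1/2\}$: by (\ref{eq:2.9}), $M^\la$ decays like $|y|^{-x-7/2}$ along such lines, and $\ze'/\ze$ grows only polynomially off its zeros, so the double integral is absolutely convergent. Under RH no additional nontrivial zeros lie between $\si=1/2$ and $\si=1/4$, so $H^\la$ extends to the wider strip. The bound $H^\la(s)={\cal O}(\la\log\la)$ on $[1/2,3/4]$ is obtained by tracking the explicit factor $\la^z$ in $M^\la(z)=\la^z M(z)$ across the final vertical contour and integrating against the classical log-density $N(T+1)-N(T)={\cal O}(\log T)$ for zeros of $\ze$. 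The second identity in (\ref{eq:3.2}) drops out by specializing the first to $\la=1$: since $E$ is supported in $[-1,1]$, the sum defining $V^1(s)$ is empty, so $-1/[4(s-1/2)^2]+A^E/(s-1/2)+\Si^1(s)+H^1(s)\equiv 0$; subtracting this from the $\la$-form yields the stated formula, with $H^\la$ replaced by $H^\la-H^1$.

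The main obstacle will be the conditional convergence of $\Si^\la_2$ without RH. I would handle this by first working with truncated sums over zeros with $|{\rm Im}\,\rho|,|{\rm Im}\,\rho'|\le R$ and truncated contours $|{\rm Im}\,z_j|\le R$, then showing that the horizontal connecting segments vanish as $R\to\infty$ along suitably chosen sequences (a standard trick exploiting that consecutive ordinates $\ga_n$ are separated by more than $c/\log\ga_n$). Verifying that the smoothness assumptions on $E$ ($E'''$ of bounded variation) are precisely what is needed to make the decay of $M^\la$ in Proposition \ref{prop:2.1} strong enough to beat the growth of $\ze'/\ze$ on those connectors is where the bulk of the technical work resides.
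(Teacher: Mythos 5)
Your overall strategy is the paper's: start from $V^\la=T^\la-D_0+H^\la$, shift both contours in the repeated integral for $T^\la(s)$ and collect residues, get the double pole from $D_0$ alone, and obtain the third line of (\ref{eq:3.2}) by subtracting the identity for $V^1\equiv 0$. The truncation along ordinates $R_k$ bounded away from the $\ga_n$ is also exactly how the paper handles the conditionally convergent double sum. But your residue catalogue --- the heart of the computation --- is wrong in a way that would prevent you from actually arriving at (\ref{eq:3.1})--(\ref{eq:3.2}). First, the Gamma factors \emph{do} contribute poles in the swept region: $\Ga(z_j)$ has a simple pole at $z_j=0$ (equivalently, after the paper's substitution, $\Ga(z-s)$ has a pole at $z=s$), and this pole is crossed when the contours move from ${\rm Re}\,z_j>1-\si$ to their final position. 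Its residue is precisely what produces the factor $\ze'(s)/\ze(s)$: the double residue at $(0,0)$ gives $\{\ze'(s)/\ze(s)\}^2$ (using $M^\la(0)=1$), and the mixed residues at $(0,\rho-s)$ give the single sum $2(\ze'/\ze)(s)\sum_\rho\Ga(\rho-s)M^\la(\rho-s)\cos\{\pi(\rho-s)/2\}$. You instead attribute these terms to the mixed residues at $(1-s,\rho-s)$; those actually yield $-2\Ga(1-s)\sum_\rho\Ga(\rho-s)M^\la(1+\rho-2s)\sin(\pi\rho/2)$-type expressions (the paper's $U^\la_4$), which go into $H^\la$ and are the sole reason for the caveat ``analytic for $1/4<\si<1$ \emph{under RH}'': they have poles at $s=\rho/2$. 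With your bookkeeping these terms never appear, so you cannot explain that part of the statement.

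Second, the pole of $M^\la(z_1+z_2)$ at $z_1+z_2=1$ is never swept by a leftward displacement: the initial contours have ${\rm Re}(z_1+z_2)<1$ and move further left. The term $A^E\la/(s-1/2)$ does not arise as a contour-crossing residue of $M^\la$; it arises because the \emph{double} residue at the $\ze$-poles $z_1=z_2=1-s$ equals $\Ga^2(1-s)M^\la(2-2s)$, and this function of $s$ has a first-order pole at $s=1/2$ (where $2-2s$ hits the pole of $M^\la$ at $1$, residue $-(2\la/\pi)A^E$, combined with $\Ga(1/2)^2=\pi$ and the derivative $-2$ of $2-2s$). These are not cosmetic misattributions: the identity (\ref{eq:3.2}) is an exact accounting of nine residue terms (four forming $\Si^\la$, one giving $A^E\la/(s-1/2)$, four absorbed into $H^\la$), and your version assigns the wrong analytic expressions to the wrong poles, so the claimed equality would not check out. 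You would need to redo the residue computation with the pole of $\Ga$ at the origin included, and with the $M^\la$ pole entering only through the $s$-dependence of the $(1-s,1-s)$ residue.
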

The (extended) Wiener--Ikehara Theorem will
now show that the Hardy--Littlewood conjectures for prime
pairs $(p,\,p+2r)$ are true if and only if the differences
$\Si^\la(s)-\Si^1(s)$ exhibit certain specific boundary
behavior  as $\si\searrow 1/2$; cf.\ (\ref{eq:2.4}). To make
this precise, define
\begin{equation} \label{eq:3.3}
R(\la)=2\sum_{0<2r\le\la}
\,E(2r/\la)C_{2r}-A^E(\la-1).
\end{equation}
By induction Theorems \ref{the:3.1} and \ref{the:1.1} imply
\begin{corollary} \label{cor:3.2}
Suppose that the prime-pair conjecture for pairs
$(p,\,p+2r)$ is true for every $r<m$. Then the PPC for
prime pairs $(p,\,p+2m)$ is true if and only if for some
(or every) smooth function $E$ and some (or every)
number $\la\in(2m,2m+2]$, the function
\begin{equation} \label{eq:3.4}
G^\la(s) \stackrel{\mathrm{def}}{=}
\Si^\la(s)-\Si^1(s)-\frac{R(\la)}{s-1/2}
\end{equation} 
has good (local pseudofunction) boundary behavior as
$\si\searrow 1/2$. 
\end{corollary}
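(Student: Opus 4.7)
The plan is to reduce the combination \(G^\la(s)\) to a linear combination of the individual remainders \(G_{2r}(s) = D_{2r}(s) - C_{2r}/(s-1/2)\) from (\ref{eq:2.4}), and then invoke Theorem \ref{the:1.1} term by term. The key choice \(\la\in(2m,2m+2]\) ensures, thanks to the support of \(E\) inside \([-1,1]\), that the only ``new'' index in the resulting sum beyond those handled inductively is \(r=m\).

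Concretely, I would substitute \(D_{2r}(s) = C_{2r}/(s-1/2) + G_{2r}(s)\) into the defining formula \(V^\la(s) = 2\sum_{0<2r\le\la} E(2r/\la)D_{2r}(s)\) and isolate the pole part \(P(\la)/(s-1/2)\) with \(P(\la) = 2\sum_{0<2r\le\la} E(2r/\la)C_{2r}\). Matching this against the second representation in (\ref{eq:3.2}) and using \(R(\la) = P(\la) - A^E(\la-1)\) from (\ref{eq:3.3}) yields the identity
\[
G^\la(s) \;=\; 2\sum_{0<2r\le\la} E(2r/\la)\,G_{2r}(s) \;-\; H^\la(s).
\]
For \(\la\in(2m,2m+2]\) the sum runs effectively over \(r=1,\ldots,m\): the index \(r=m+1\) is permitted only when \(\la=2m+2\), and then \(E(1)=0\) kills its contribution. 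For any nondegenerate sieving function (e.g.\ the Jackson kernel \(E_J\), which is positive on \([0,1)\)) the coefficient \(E(2m/\la)\) is strictly positive.

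By the inductive hypothesis the PPC holds for every \(r<m\), so Theorem \ref{the:1.1}, applied with \(w=2s\) to rescale \(u\searrow 1\) into \(\si\searrow 1/2\), tells us that each \(G_{2r}(s)\) with \(r<m\) has local pseudofunction boundary behavior as \(\si\searrow 1/2\). Since \(H^\la\) is holomorphic for \(1/2\le\si<1\), its boundary value on \(\{\si=1/2\}\) is smooth and hence locally a pseudofunction. Because local pseudofunction boundary behavior is preserved under finite linear combinations and under division by a nonzero scalar, the identity above shows that \(G^\la(s)\) has good boundary behavior iff \(G_{2m}(s)\) does. A final application of Theorem \ref{the:1.1} converts the latter into the equivalence with \(\psi_{2m}(x)\sim 2C_{2m}x\), which by (\ref{eq:2.2}) is the PPC for \((p,p+2m)\). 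The ``some or every'' quantification is automatic: once PPC for \(2m\) is established, the same identity (now with \(G_{2m}\) also handled) shows \(G^\la(s)\) has good boundary behavior for every admissible pair \((E,\la)\).

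The only real work is the algebraic identity of the second paragraph -- verifying that the pole terms assemble precisely so that the coefficient of \(1/(s-1/2)\) is exactly \(-R(\la)\) as defined in (\ref{eq:3.3}). After that bookkeeping, the corollary reduces to a clean inductive application of the two-way Wiener--Ikehara theorem, with the choice of \(\la\) in the half-open interval \((2m,2m+2]\) tailored so that the single new remainder \(G_{2m}\) appears with a strictly positive coefficient.
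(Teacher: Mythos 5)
Your proposal is correct and follows essentially the same route as the paper: both reduce $G^\la(s)$ to $2\sum_{0<2r\le\la}E(2r/\la)\bigl(D_{2r}(s)-C_{2r}/(s-1/2)\bigr)-H^\la(s)$ via (\ref{eq:3.2}) and (\ref{eq:3.3}), then apply the two-way Wiener--Ikehara theorem term by term, with the choice $\la\in(2m,2m+2]$ isolating the single new coefficient $E(2m/\la)\ne 0$.
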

The double series in (\ref{eq:3.1}) defines $\Si^\la_2(s)$ 
as a meromorphic function for $1/2<\si<1$ whose poles occur
at complex zeros of $\ze(\cdot)$, and these poles are
cancelled by those of $\Si^\la_1(s)$. Formally there is
cancellation also at the other complex zeros of
$\ze(\cdot)$. Turning to the function
$G^\la(s)$, note that by (\ref{eq:3.2}), it {\it does}
have good boundary behavior when $\la\le 2$; under RH, it
will even be analytic for $1/4<\si<1$. Indeed,
$V^\la(s)=0$ for $\la\le 2$. These observations support the
following 
\begin{conjecture} \label{con:3.3}
For every $\la>0$ and every $E$, the function $G^\la(s)$ in
$(\ref{eq:3.4})$ has an analytic continuation to the strip
$\{1/4<\si<1\}$. 
\end{conjecture}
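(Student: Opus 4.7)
The approach is to reduce the conjecture to an analytic-continuation statement about individual prime-pair remainder functions, and then to attack that using pair-correlation information about zeta's zeros. As a preliminary identity, combining the two representations of $V^\la(s)$ in Theorem \ref{the:3.1} with the definitions (\ref{eq:2.4}) and (\ref{eq:3.4}) one obtains, for $1/2<\si<1$,
\[
G^\la(s) = 2\sum_{0<2r\le\la} E(2r/\la)\,G_{2r}(s) + \{H^1(s)-H^\la(s)\},
\]
because $R(\la)+A^E(\la-1) = 2\sum_{0<2r\le\la}E(2r/\la)C_{2r}$ precisely absorbs the $(s-1/2)^{-1}$ poles coming from (\ref{eq:2.4}). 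Under RH, the bracket $H^1-H^\la$ is analytic in $\{1/4<\si<1\}$ by Theorem \ref{the:3.1}, so the conjecture is equivalent to the analytic continuation of each of the finitely many $G_{2r}$ (with $2r\le\la$) to the same strip.

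Since the Dirichlet series defining $G_{2r}$ converges only for $\si>1/2$, the continuation must be effected through the $\Si^\la$-representation. I would treat $\Si^\la_1$ and $\Si^\la_2$ separately. The single-zero sum $\Si^\la_1$ is already meromorphic in $\{\si<3\}$ by Proposition \ref{prop:2.1}, and its potential poles at complex zeros $\rho_0$ of $\ze$ are locally computable: the factor $\ze'/\ze$ contributes $-m/(s-\rho_0)$ (with $m$ the multiplicity), while the $\rho=\rho_0$ term of the inner sum has a $\Ga(\rho_0-s)$-pole that combines with the regular remaining factors to leave $\Si^\la_1$ with an explicit simple pole at $s=\rho_0$. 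One must then compute the matching residue from $\Si^\la_2$, coming from the terms $\rho=\rho_0$ (or $\rho'=\rho_0$) of the double sum, and verify that it cancels the one from $\Si^\la_1$. This cancellation is forced by the holomorphy of $V^\la$ at zeros of $\ze$ in $\{1/2<\si<1\}$, and should persist into the extended strip.

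The heart of the matter, and the main obstacle, is giving $\Si^\la_2(s)$ itself a meaning as an analytic function for $\si>1/4$. On the critical line the estimate (\ref{eq:2.9}) gives $M^\la(\rho+\rho'-2s)\ll(|\ga+\ga'-2\tau|+1)^{2\si-9/2}$, which degrades as $\si$ decreases. Splitting the double sum by the sign pattern of $(\ga,\ga')$, one finds that ``same-sign'' pairings decay exponentially through the product of $\Ga$-factors (with the cosine growth being dominated by the Gamma decay), whereas ``opposite-sign'' pairings with $\ga\approx-\ga'$ are tamed only by $M^\la$ and yield a weighted counting sum over close pairs of zeros. To secure absolute convergence one must invoke a quantitative strengthening of Montgomery's pair-correlation conjecture \cite{Mo73}, uniform in height, so that the $(|\ga|+1)^{2\si-9/2}$-weighted pair-correlation measure is integrable; additional telescoping from the factor $M^\la-M^1$ in $\Si^\la_2-\Si^1_2$ then supplies the further cancellation needed near the boundary $\si=1/4$. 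This pair-correlation input is precisely the ``speculative extension'' of Montgomery's work referenced in the abstract, and proving it appears to lie well beyond current unconditional techniques; it is the main barrier to a rigorous proof of Conjecture \ref{con:3.3}.
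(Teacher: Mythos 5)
This statement is labelled a \emph{conjecture}, and the paper contains no proof of it; the only support offered in Section \ref{sec:3} is that $G^\la(s)$ trivially has the asserted continuation for $\la\le 2$ (since $V^\la(s)=0$ there, so that by (\ref{eq:3.2}) $G^\la$ reduces to $-H^\la$, analytic for $1/4<\si<1$ under RH), together with the formal cancellation of poles between $\Si^\la_1$ and $\Si^\la_2$ at zeta's zeros. Your proposal correctly recognizes that no proof is available and does not claim one, so there is no question of a hidden gap being passed off as complete: you explicitly locate the unbridgeable step. Your reduction $G^\la(s)=2\sum_{0<2r\le\la}E(2r/\la)G_{2r}(s)+\{\text{analytic}\}$ is a correct consequence of (\ref{eq:2.4}), (\ref{eq:3.2}) and (\ref{eq:3.4}), with the caveat that the bracketed term is only known to be analytic down to $\si=1/4$ under RH, so the asserted equivalence with continuation of the individual $G_{2r}$ is itself conditional. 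Your analysis of the two halves of $\Si^\la_2$ (same-sign pairs controlled by the net $e^{-\pi\min(\ga,\ga')}$ decay of the Gamma factors against the cosine, opposite-sign pairs reducing to a weighted sum over close pairs of ordinates) matches what the paper actually establishes in Sections \ref{sec:7} and \ref{sec:8}, and your identification of a strengthened, uniform pair-correlation hypothesis as the missing input is exactly the ``speculative extension of Montgomery's work'' the paper invokes for the weaker Hypothesis \ref{hyp:3.5}.

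Two small corrections. First, the bound $M^\la(\rho+\rho'-2s)\ll(|\ga+\ga'-2\tau|+1)^{2\si-9/2}$ actually \emph{improves} as $\si$ decreases; what degrades is the factor $|\Ga(\rho-s)\Ga(\rho'-s)|\asymp(\ga\ga')^{-\si}e^{-\pi(\ga+\ga')/2}$ on the near-diagonal opposite-sign terms, whose absolute sum already diverges at $\si=1/2$ — this is where the pole sits and where all cancellation must come from. Second, be aware that even granting a quantitatively strong pair-correlation statement, it is not known how to convert control of the \emph{averaged} pair-correlation measure into an actual analytic continuation of $\Si^\la_2$ below the critical line; the paper itself only extracts the much weaker residue bound of Hypothesis \ref{hyp:3.5} from such heuristics. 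So your plan is a reasonable roadmap consistent with the paper's own discussion, but it should be presented as a reduction of one conjecture to another, not as a proof strategy that could be completed by known methods.
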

\noindent If this is true, the counting functions
$\pi_{2r}(x)$ all satisfy estimates of type (\ref{eq:1.4}).

\smallskip\noindent{\scshape Conditional abundance of prime
pairs.} It will follow from Section \ref{sec:7} that the part
of the sum $\Si^\la_2(s)$ in which ${\rm Im}\,\rho$ and
${\rm Im}\,\rho'$ have the same sign defines a meromorphic
function for $1/2\le\si<1$ whose only poles occur at 
complex zeros of $\ze(\cdot)$. Thus for a study of its
pole-type behavior near the point
$s=1/2$, the double sum $\Si^\la_2(s)$ in (\ref{eq:3.1})
may be reduced to the sum $\Si^\la_3(s)$ in which ${\rm
Im}\,\rho$ and ${\rm Im}\,\rho'$ have opposite sign. Hence
in the study of the PPC under RH, the {\it differences} of
zeta's zeros on the same side of the real axis play a key
role.
\begin{theorem} \label{the:3.4}
Assume RH. Then the pole-type behavior of $\Si^\la_3(s)$ and
$\Si^\la(s)$ as $s\searrow 1/2$ is the same as that of the
reduced sum
\begin{equation} \label{eq:3.5}
\Si^\la_4(s) = 2\pi\sum_{\ga,\,\ga';\,|\ga'-\ga|<\ga^{1/2}}\,
\ga^{-2s+i(\ga-\ga')}M^\la\{1-2s+i(\ga-\ga')\},
\end{equation}
where $\ga$ and $\ga'$ run over the imaginary parts of
the zeros of $\ze(\cdot)$ in the upper half-plane. 
\end{theorem}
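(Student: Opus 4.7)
The plan is to apply Stirling's asymptotic to the $\Ga$ factors in $\Si^\la_2(s)$ and then reduce the resulting double sum to $\Si^\la_4(s)$ by Taylor approximation and decay estimates on $M^\la$. From the remark preceding the theorem (with proof promised in Section~\ref{sec:7}), the same-sign part of $\Si^\la_2(s)$ is meromorphic in $\{1/2\le\si<1\}$ with poles only at complex zeros of $\ze$, hence holomorphic at $s=1/2$; by the analysis following (\ref{eq:3.1}), $\Si^\la_1(s)$ has the same property. It therefore suffices to prove that $\Si^\la_3(s)-\Si^\la_4(s)$ is holomorphic at $s=1/2$.

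Parametrize an opposite-sign pair as $\rho=\tfrac{1}{2}+i\ga$, $\rho'=\tfrac{1}{2}-i\ga'$ with $\ga,\ga'>0$, giving $\rho+\rho'-2s=1-2s+i(\ga-\ga')$ and $\cos\{\pi(\rho-\rho')/2\}=\cosh\{\pi(\ga+\ga')/2\}$. Stirling's formula yields
$$\Ga\bigl(\tfrac{1}{2}-s\pm i\ga\bigr)=\sqrt{2\pi}\,\ga^{-s}\exp\bigl(\pm i\ga(\log\ga-1)\mp i\pi s/2-\pi\ga/2\bigr)\bigl(1+O(\ga^{-1})\bigr),$$
and on multiplying the two $\Ga$ factors the $\mp i\pi s/2$ phases cancel, the exponential $e^{-\pi(\ga+\ga')/2}$ neutralizes the dominant half of $\cosh$, and (after adding the mirror pair with $\rho$ below and $\rho'$ above, which after relabeling $\ga\leftrightarrow\ga'$ gives the same expression) the product reduces to
$$2\pi\,\ga^{-s}(\ga')^{-s}\exp\bigl(i[\ga(\log\ga-1)-\ga'(\log\ga'-1)]\bigr)\bigl(1+O(\ga^{-1}+(\ga')^{-1})\bigr).$$
For $|\ga-\ga'|<\ga^{1/2}$, set $\de=\ga'-\ga$. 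Taylor expansion of $f(x)=x(\log x-1)$ at $\ga$ gives $f(\ga')-f(\ga)=\de\log\ga+\de^2/(2\ga)+O(\ga^{-1/2})$, and $(\ga')^{-s}=\ga^{-s}(1+O(\ga^{-1/2}))$; inserting these, the summand becomes that of $\Si^\la_4(s)$ modulated by the bounded phase $e^{-i\de^2/(2\ga)}$ and a multiplicative error $1+O(\ga^{-1/2})$.

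The main technical point is to show that both the discarded pairs with $|\ga-\ga'|\ge\ga^{1/2}$ and these errors contribute functions holomorphic at $s=1/2$. Throughout, the decay $|M^\la(1-2s+i(\ga-\ga'))|\ll(|\ga-\ga'|+1)^{-7/2}$ from (\ref{eq:2.9}) is decisive. Combined with $|\ga|^{-2s}\asymp\ga^{-1}$ near $s=1/2$ and the zero-counting $N(T)\sim(T/2\pi)\log T$, the discarded range is dominated by
$$\sum_{T\ge 1}T^{-1}(\log T)\int_{T^{1/2}}^{\infty}\log(T+t)\,t^{-7/2}\,dt\ll\sum_{T\ge 1}T^{-9/4}(\log T)^3,$$
which converges absolutely. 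The bound $|e^{i\theta}-1|\le|\theta|$ applied to $\theta=-\de^2/(2\ga)$ supplies an extra factor $\de^2/\ga$, while $(\ga'/\ga)^{-s}-1$ supplies an extra factor $|\de|/\ga$; in both cases the corresponding double sums converge absolutely since $\int_0^\infty t^2(t+1)^{-7/2}\,dt<\infty$. Hence $\Si^\la_3(s)-\Si^\la_4(s)$ extends holomorphically to a neighborhood of $s=1/2$, establishing Theorem~\ref{the:3.4}.
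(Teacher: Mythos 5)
Your argument is correct and follows essentially the same route as the paper's Section 8: reduce to the opposite-sign sum $\Si^\la_3(s)$, apply Stirling's formula so that the $\cosh\{\pi(\ga+\ga')/2\}$ factor is neutralized by $e^{-\pi(\ga+\ga')/2}$, Taylor-expand the phase $\ga\log\ga-\ga$ to produce $\ga^{i(\ga-\ga')}$, and control both the discarded range $|\ga-\ga'|\ge\ga^{1/2}$ and the Taylor errors via the decay $M^\la(-2\de+it)\ll(|t|+1)^{2\de-7/2}$ together with the zero-counting estimate. The only differences from the paper (Stieltjes integrals against $dN(y)\,dN(v)$ versus direct sums, and the paper's cutoff $y^{1/4}$ for the discarded set versus your $y^{1/2}$) are cosmetic.
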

The expression in (\ref{eq:3.5}) is reminiscent of the
pair-correlation function of zeta's complex zeros which
was studied by Montgomery \cite{Mo73} et al. Since the 
constants $C_{2r}$ have mean value $1$, the function $R(\la)$
in (\ref{eq:3.3}) is $o(\la)$ as $\la\to\infty$; cf.\
(\ref{eq:2.6}). [By (\ref{eq:1.6}) it will even be
$\cal{O}(\log\la)$.] The corresponding hypothesis below
regarding $\Si^\la(s)-\Si^1(s)$ would follow from a plausible
extension of Montgomery's work; see Section \ref{sec:9}.
\begin{hypothesis} \label{hyp:3.5}
For smooth $E$ the `upper residue'
\begin{equation} \label{eq:3.6}
\om(\la)=\om^E(\la)=\limsup_{s\searrow
1/2}\,(s-1/2)\{\Si^\la(s)-\Si^1(s)\}
\end{equation}
is $o(\la)$ as $\la\to\infty$.
\end{hypothesis}
It follows from (\ref{eq:3.2}) and (\ref{eq:1.5}) that
$\om(\la)=\cal{O}(\la)$. If Hypothesis \ref{hyp:3.5} is true
there will be an abundance of prime pairs:
\begin{theorem} \label{the:3.6}
Assume Hypothesis \ref{hyp:3.5}. Then for 
every $\eps>0$, there is a positive integer $m$,
depending on $\om(\cdot)$ and $\eps$, such that 
\begin{equation} \label{eq:3.7}
\limsup_{x\to \infty}\,\frac{1}{m}
\sum_{r\le m}\,\frac{\pi_{2r}(x)}{x/\log^2x}>2-\eps.
\end{equation}
\end{theorem}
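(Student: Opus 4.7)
The plan is to combine Theorem~\ref{the:3.1}, Hypothesis~\ref{hyp:3.5}, and a one-sided Abelian inequality to obtain a lower bound on an averaged $\psi_{2r}(x)/x$, and then to convert this into a lower bound on $\pi_{2r}(x)/(x/\log^2 x)$ and invoke the averaging over $r$.

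First I would reformulate $V^\la(s)$ as a bona fide Dirichlet series (plus a holomorphic remainder) in the variable $w = 2s$. Using $n(n+2r) = (n+r)^2 - r^2$ and the expansion $(n(n+2r))^{-s} = (n+r)^{-2s}\{1 + sr^2/(n+r)^2 + O(r^4/n^4)\}$, write
\begin{equation*}
V^\la(s) = \tilde V^\la(2s) + H_0(s),\qquad \tilde V^\la(w) = \sum_{m\ge 2}\frac{b_m}{m^w},
\end{equation*}
with $b_m = 2\sum_{0<2r\le\min(\la,2m)}E(2r/\la)\La(m-r)\La(m+r) \ge 0$ obtained via the reindexing $m = n+r$, while $H_0(s)$ is given by an absolutely convergent double sum for $\si > -1/2$, hence holomorphic at $s = 1/2$. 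By Theorem~\ref{the:3.1}, $V^\la(s) = A^E(\la-1)/(s-1/2) + \Si^\la(s) - \Si^1(s) + H^\la(s)$ with $H^\la$ analytic at $s=1/2$, so Hypothesis~\ref{hyp:3.5} yields, for any $\eta > 0$ and all sufficiently large $\la$,
\begin{equation*}
\limsup_{s\searrow 1/2}(s-1/2)V^\la(s) = A^E(\la-1) + \om^E(\la) \ge (A^E - \eta)\la - A^E.
\end{equation*}

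Since $b_m \ge 0$, the classical one-sided Abelian inequality for $\tilde V^\la$, transferred through $w = 2s$ and absorbing the vanishing contribution $(s-1/2)H_0(s)$, gives
\begin{equation*}
\limsup_{x\to\infty}\frac{1}{x}\sum_{m\le x} b_m \ge 2\bigl\{(A^E - \eta)\la - A^E\bigr\}.
\end{equation*}
The reverse reindexing $n = m-r$ identifies $\sum_{m\le x} b_m = 2\sum_{0<2r\le\la} E(2r/\la)\psi_{2r}(x-r) = 2\sum E(2r/\la)\psi_{2r}(x) + O_\la(\log^2 x)$. Because $\psi_{2r}(x) \le (\log x)^2\pi_{2r}(x) + O(\sqrt x \log^2 x)$ (the prime-pair part is bounded by $\log p\log(p+2r) \le \log^2 x\,(1+o(1))$, and higher prime powers contribute only $O(\sqrt x\log^2 x)$), dividing by $x/\log^2 x$ yields
\begin{equation*}
\limsup_{x\to\infty}\sum_{0<2r\le\la}E(2r/\la)\frac{\pi_{2r}(x)}{x/\log^2 x} \ge (A^E - \eta)\la - A^E.
\end{equation*}
Since $E(2r/\la) \le E(0) = 1$ by the monotonicity of $E$ on $[0,1]$, the weights can be dropped; setting $\la = 2m$ and dividing by $m$,
\begin{equation*}
\limsup_{x\to\infty}\frac{1}{m}\sum_{r\le m}\frac{\pi_{2r}(x)}{x/\log^2 x} \ge 2(A^E - \eta) - A^E/m.
\end{equation*}

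Given $\eps > 0$, I would choose a smooth $E$ with $A^E$ close to $1$ (obtainable by smoothly damping the constant function $1$ near $|\nu|=1$ while respecting the smoothness required in Section~\ref{sec:2}), then a small $\eta$, and finally $m$ large enough that $A^E/m$ is small and Hypothesis~\ref{hyp:3.5} guarantees $|\om^E(2m)| \le \eta\cdot 2m$. These three choices collectively drive $2(1-A^E) + 2\eta + A^E/m$ below $\eps$, yielding the asserted inequality. The main obstacle I foresee is precisely this Abelian step: one must pass rigorously from the \emph{limsup} information in Hypothesis~\ref{hyp:3.5} about $(s-1/2)(\Si^\la - \Si^1)$ through the non-standard Dirichlet representation of $V^\la$ to a limsup bound on the summatory function $\sum_{m\le x}b_m$, with the holomorphic error $H_0(s)$ controlled so that no residue information is lost in the transfer.
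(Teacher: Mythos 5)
Your proposal is correct and follows essentially the same route as the paper: Theorem \ref{the:3.1} plus Hypothesis \ref{hyp:3.5} give the lower bound $A^E(\la-1)+\om(\la)$ for the upper residue of $V^\la$ at $s=1/2$, a one-sided Abelian inequality (with the factor $2$ from the substitution $w=2s$) transfers this to $\limsup_x x^{-1}\sum\psi_{2r}(x)$, nonnegativity lets one drop the weights $E(2r/\la)\le 1$, and one finally passes from $\psi_{2r}$ to $\pi_{2r}$. The only cosmetic differences are that the paper drops the weights before the Abelian step (using $D_{2r}(s)\ge 0$) and carries out your "main obstacle" directly by contradiction and partial summation on $\sum_{r\le m}D_{2r}(s)=\int x^{-s}(x+2r)^{-s}\,d\psi_{2r}(x)$, rather than reindexing $V^\la$ into a single Dirichlet series via $m=n+r$.
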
 
\noindent Here the constant $2$ would be optimal.

We finally mention an interesting positivity property of
certain double sums $\Si^\la_2(s)$ in (\ref{eq:3.1}): 
\begin{proposition} \label{prop:3.7}
Let $E^\la$ be a sieving function (such as
$E^\la_J$) for which $\hat E^\la(t)\ge 0$. Then
$\Si^\la_2(s)\ge 0$ when $1/2<s<1$.
\end{proposition}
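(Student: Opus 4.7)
The plan is to exhibit $\Si^\la_2(s)$ as an integral of $\hat E^\la(t)$ against a manifestly non-negative function of $t$. Substituting the Mellin representation $(\ref{eq:2.7})$ of $M^\la(\rho+\rho'-2s)$ into the defining double sum $(\ref{eq:3.1})$, and using $\rho+\rho'-2s=(\rho-s)+(\rho'-s)$, one obtains (formally)
$$\Si^\la_2(s)=\frac{1}{\pi}\int_0^\infty \hat E^\la(t)\sum_{\rho,\rho'}\Ga(\rho-s)\Ga(\rho'-s)\,t^{-(\rho-s)}t^{-(\rho'-s)}\cos\{\pi(\rho-\rho')/2\}\,dt.$$
For $1/2<s<1$ and smooth $E$, the point $\rho+\rho'-2s$ lies in the strip of convergence of the Mellin integral (cf.\ Proposition $\ref{prop:2.1}$), so the substitution is legitimate term by term.

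The algebraic heart of the argument is the identity
$$\cos\{\pi(\rho-\rho')/2\}=\cos\{\pi(\rho-s)/2\}\cos\{\pi(\rho'-s)/2\}+\sin\{\pi(\rho-s)/2\}\sin\{\pi(\rho'-s)/2\},$$
which, together with the factorization $t^{-(\rho+\rho'-2s)}=t^{-(\rho-s)}t^{-(\rho'-s)}$, exhibits the $(\rho,\rho')$-summand as $\Phi(t)^2+\Psi(t)^2$, where
$$\Phi(t)=\sum_\rho\Ga(\rho-s)\cos\{\pi(\rho-s)/2\}\,t^{-(\rho-s)},\qquad\Psi(t)=\sum_\rho\Ga(\rho-s)\sin\{\pi(\rho-s)/2\}\,t^{-(\rho-s)}.$$
Since $\ze$ has real coefficients, its complex zeros occur in conjugate pairs $\rho,\overline\rho$; for real $s$ and $t>0$ the summand at $\overline\rho$ is the complex conjugate of the one at $\rho$, so $\Phi(t)$ and $\Psi(t)$ are real. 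Combined with the hypothesis $\hat E^\la(t)\ge 0$, this makes the integrand non-negative and yields $\Si^\la_2(s)\ge 0$.

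The one genuine obstacle is convergence: although the double series $\Si^\la_2(s)$ is absolutely convergent under RH (by Lemma $\ref{lem:4.2}$), the single sums $\Phi(t)$ and $\Psi(t)$ generally are not, because their terms decay only polynomially in $|{\rm Im}\,\rho|$. I would circumvent this by applying the computation above to the truncated sums over $|{\rm Im}\,\rho|,|{\rm Im}\,\rho'|<R$; the symmetry of this cutoff preserves the pairing of $\rho$ with $\overline\rho$, so the truncated $\Phi_R(t),\Psi_R(t)$ remain real, and the corresponding truncation $\Si^{\la,R}_2(s)$ is non-negative for each $R$. Passing to $R\to\infty$ — using either the absolute convergence under RH, or equivalently the definition of $\Si^\la_2$ without RH indicated in Section $\ref{sec:3}$ — transfers the inequality to $\Si^\la_2(s)\ge 0$ for $1/2<s<1$.
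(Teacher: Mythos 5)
Your argument is correct and is essentially the paper's own proof: both substitute the Mellin/Fourier representation \eqref{eq:2.7} of $M^\la$ into the double sum, use the cosine addition formula to factor the summand, and reduce to square partial sums over $|{\rm Im}\,\rho|,|{\rm Im}\,\rho'|<R$ (Proposition \ref{prop:7.1}) before letting $R\to\infty$. The only cosmetic difference is that the paper replaces $\rho'$ by $\overline\rho'$ and writes each truncated sum as $\bigl|\sum_\rho\cdots\bigr|^2$, whereas you keep $\rho'$ and observe that the conjugate-symmetry of the zero set makes $\Phi_R$ and $\Psi_R$ real, so that $\Phi_R^2+\Psi_R^2\ge 0$ -- the two formulations are equivalent.
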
 
This positivity  and a speculative equidistribution result
for prime pairs with different values of $2r$
would also imply that there is an abundance of prime pairs;
see Section
\ref{sec:10}.

\setcounter{equation}{0}    
\section{Complex representation for $E^\la(\al-\be)$}
\label{sec:4} 
For the discussion of $T^\la(s)$ in Section \ref{sec:5} we
need a complex integral for the sieving function
$E^\la(\al-\be)$ in which $\al$ and $\be$ occur separately.
It is obtained from the representation of $E^\la(\al-\be)$ as
an inverse Fourier (cosine) transform:
$$E^\la(\al-\be)=\frac{1}{\pi}
\int_0^\infty \hat E^\la(t)\cos\{(\al-\be)t\}dt$$ 
and a repeated complex integral for 
$$\cos\{(\al-\be)t\}=\cos\al t\cos\be t +\sin\al t\sin\be
t.$$
To set the stage we start with a complex representation
for $\cos\al$ and $\sin\al$ with $\al>0$. Setting $z=x+iy$
we write $L(c)$ for the `vertical line' $\{x=c\}$; the
factor $1/(2\pi i)$ in complex integrals will be omitted.
Thus
$$\int_{L(c)}f(z)dz\stackrel{\mathrm{def}}{=}
\frac{1}{2\pi i}\int_{c-i\infty}^{c+i\infty}f(z)dz.$$
Mellin inversion of the improper Euler integral  
$$\int_0^{\infty-}(\cos\al)\al^{z-1}d\al=\Ga(z)\cos(\pi
z/2)\qquad(0<x<1)$$
now gives the improper complex integral 
\begin{align}
\cos\al &= \int_{L(c)}^*\Ga(z)\al^{-z}\cos(\pi
z/2)dz \notag \\ &= \lim_{A\to\infty}\frac{1}{2\pi
i}\int_{c-iA}^{c+iA}\,\cdots\quad(0<c<1/2);\notag
\end{align}
there is a similar representation for $\sin\al$.
It is important for us to have absolutely convergent
integrals. We therefore replace the line $L(c)$ by a
path $L(c,B)=L(c_1,c_2,B)$ with suitable $c_1<c_2$ and
$B>0$ (cf.\ Figure \ref{fig:1}):
\begin{equation} \label{eq:4.1}
L(c,B)=\left\{\begin{array}{lllll}
\mbox{$\quad$the half-line} & \mbox{$\{x=c_1,\,-\infty<y\le
-B\}$}\\
\mbox{$+\;$the segment} & \mbox{$\{c_1\le x\le
c_2,\,y=-B\}$}\\
\mbox{$+\;$the segment} & \mbox{$\{x=c_2,\,-B\le y\le B\}$}\\
\mbox{$+\;$the segment} & \mbox{$\{c_2\ge x\ge
c_1,\,y=B\}$}\\
\mbox{$+\;$the half-line} & \mbox{$\{x=c_1,\,B\le
y<\infty\}$.}  
\end{array}\right.
\end{equation}
Taking $c_1<-1/2$, $c_2>0$ and using formula (\ref{eq:2.10}),
one thus obtains the absolutely convergent repeated integral
\begin{align}
\cos\{(\al-\be)t\} &= \int_{L(c,B)}
\Ga(z)\al^{-z}t^{-z}dz\,\cdot \notag \\&  \quad\cdot
\int_{L(c,B)}\Ga(w)\be^{-w}t^{-w}\cos\{\pi(z-w)/2\}dw.
\notag
\end{align}
Multiplying both sides by $\hat E^\la(t)$,
integrating over $\{0<t<\infty\}$, inverting order of
integration and using formula (\ref{eq:2.7}) for
$M^\la(\cdot)$, one obtains
\begin{proposition} \label{prop:4.1}
Let $\al,\,\be>0$, $-3/2<c_1<0<c_2<1/2$ and $B>0$. Then
for smooth sieving functions $E^\la(\cdot)$ as in Section
\ref{sec:2} one has the representation
\begin{align} \label{eq:4.2}
E^\la(\al-\be) & = \int_{L(c,B)}\Ga(z)\al^{-z}dz
\int_{L(c,B)}\Ga(w)\be^{-w}\,\cdot\notag
\\ & \quad\cdot M^\la(z+w)\cos\{\pi(z-w)/2\}\,dw.
\end{align}
\end{proposition}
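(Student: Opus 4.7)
The plan is to start from the Fourier cosine representation
$$E^\la(\al-\be) = \frac{1}{\pi}\int_0^\infty \hat E^\la(t)\cos\{(\al-\be)t\}\,dt,$$
expand the integrand as $\cos\{(\al-\be)t\}=\cos\al t\cos\be t+\sin\al t\sin\be t$, substitute the Mellin--Barnes integrals recalled in the excerpt for the four trigonometric factors, and then interchange the order of integration so as to recognise the $t$-integral as $M^\la(z+w)$. The algebraic heart of the argument is the identity
$$\cos(\pi z/2)\cos(\pi w/2)+\sin(\pi z/2)\sin(\pi w/2)=\cos\{\pi(z-w)/2\},$$
which collapses the two double integrals arising from the $\cos\cos$ and $\sin\sin$ products into the single double integral appearing in (\ref{eq:4.2}).

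First I would fix $t>0$ and establish, on the contour $L(c,B)$, the representations
$$\cos(\al t)=\int_{L(c,B)}\Ga(z)(\al t)^{-z}\cos(\pi z/2)\,dz,\qquad \sin(\al t)=\int_{L(c,B)}\Ga(z)(\al t)^{-z}\sin(\pi z/2)\,dz,$$
starting from the conditionally convergent principal-value integral on the vertical line $L(c_2)$ recalled in the excerpt and deforming to $L(c,B)$. Because $L(c,B)$ coincides with $L(c_2)$ for $|y|\le B$ and only veers left to $\mathrm{Re}\,z=c_1<0$ at $|y|>B$, the region swept out by the deformation, $\{c_1<\mathrm{Re}\,z<c_2,\,|y|>B\}$, avoids the real axis and hence contains no $\Ga$-poles; no residue is picked up, and Stirling's bound (\ref{eq:2.10}) gives absolute convergence on $L(c,B)$. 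Multiplying out the $\al t$ and $\be t$ versions, applying the trigonometric identity above, and using Fubini on $L(c,B)\times L(c,B)$ (absolutely convergent for each fixed $t>0$), I would obtain
$$\cos\{(\al-\be)t\}=\int_{L(c,B)}\Ga(z)\al^{-z}t^{-z}\,dz\int_{L(c,B)}\Ga(w)\be^{-w}t^{-w}\cos\{\pi(z-w)/2\}\,dw.$$

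Next I would multiply by $\hat E^\la(t)/\pi$, integrate over $t\in(0,\infty)$, and bring the $t$-integration inside the contour integrations; the inner integral then becomes $\frac{1}{\pi}\int_0^\infty\hat E^\la(t)t^{-(z+w)}\,dt=M^\la(z+w)$ by definition (\ref{eq:2.7}). The constraints $-3/2<c_1<0<c_2<1/2$ keep $\mathrm{Re}(z+w)\in(2c_1,2c_2)\subset(-3,1)$, the range in which Proposition \ref{prop:2.1} supplies $M^\la$ as a meromorphic function and in which this $t$-integral converges (the $t\to 0$ end handled by $\mathrm{Re}(z+w)<1$, the $t\to\infty$ end by the decay $|\hat E^\la(t)|\ll(1+t)^{-4}$ that the smoothness hypothesis on $E$ provides via four integrations by parts).

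The main obstacle will be justifying this final interchange. In the opposite-sign sectors $\mathrm{Im}\,z\cdot\mathrm{Im}\,w<0$ of $L(c,B)\times L(c,B)$, the growth $|\cos\{\pi(z-w)/2\}|\asymp e^{\pi(|y_z|+|y_w|)/2}$ exactly cancels the Stirling decay of $|\Ga(z)\Ga(w)|$, leaving only the polynomial factor $|y_z|^{c_1-1/2}|y_w|^{c_1-1/2}$, which is \emph{not} integrable unless $c_1<-1/2$. My remedy would be to prove (\ref{eq:4.2}) first in the sub-range $c_1\in(-3/2,-1/2)$, where the triple integrand is absolutely integrable on $(0,\infty)\times L(c,B)\times L(c,B)$ and Fubini is immediate, and then extend to the full range $-3/2<c_1<0$ by contour deformation. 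Both sides of (\ref{eq:4.2}) are independent of the admissible $(c_1,c_2,B)$: the only poles of the right-hand integrand in the relevant strip are the $\Ga$-poles at $z,w\in\{0,-1\}$, all sitting on the real axis and so avoided by the $B$-excursion, while the first pole of $M^\la(z+w)$ lies on $\mathrm{Re}(z+w)=1$, safely outside our region since $\mathrm{Re}(z+w)\le 2c_2<1$.
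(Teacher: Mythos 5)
Your argument reproduces the paper's own proof: the Fourier cosine representation of $E^\la(\al-\be)$, the Mellin--Barnes integrals for $\cos$ and $\sin$ deformed onto the bent path $L(c,B)$, the addition formula collapsing the $\cos\cos$ and $\sin\sin$ double integrals into the single factor $\cos\{\pi(z-w)/2\}$, and the final interchange that turns the $t$-integral into $M^\la(z+w)$. You are in fact slightly more explicit than the paper about the Fubini step, first proving (\ref{eq:4.2}) for $c_1\in(-3/2,-1/2)$, where the triple integral converges absolutely, and then extending to all of $-3/2<c_1<0$ by contour deformation using the decay (\ref{eq:2.9}) of $M^\la$ together with Lemma \ref{lem:4.2}; the paper compresses this into the remark that the more stringent requirements on $c_1,c_2$ serve to justify the inversion of the order of integration.
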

\begin{figure}[htb]
$$\includegraphics[width=4cm]{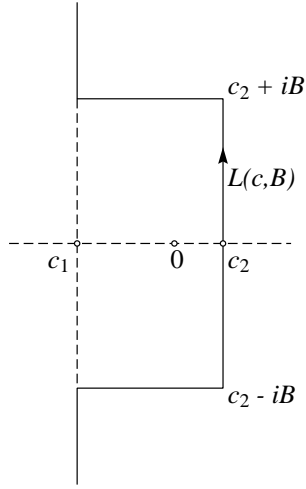}
$$
\caption{The path $L(c_1,c_2,B)$ \label{fig:1}}
\end{figure}
To justify the operations and verify the absolute
convergence of the integral in (\ref{eq:4.2}) one may use
the estimate (\ref{eq:2.9}) and a simple lemma: 
\begin{lemma} \label{lem:4.2}
For real constants $a,\,b,\,c$, the function
$$\phi(y,v)=(|y|+1)^{-a}(|v|+1)^{-b}(|y+v|+1)^{-c}$$
is integrable over ${\Bbb R}^2$ if (and only if) $a+b>1$,  
$a+c>1$, $b+c>1$ and $a+b+c>2$. For integrability over
${\Bbb R}_+^2$ the condition $a+b>1$ may be dropped. 
\end{lemma}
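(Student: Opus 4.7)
The plan is to prove necessity by testing $\phi$ on four canonical model regions that each force exactly one of the stated inequalities, and to prove sufficiency by a dyadic decomposition that records the sizes of the three factors $|y|+1$, $|v|+1$, $|y+v|+1$ simultaneously.

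For necessity, I would exhibit four regions. On the strip $\{|v|\le 1\}$ with $|y|\to\infty$ one has $|y+v|\asymp|y|$ and $\phi\asymp(|y|+1)^{-a-c}$, so integrability forces $a+c>1$; by the symmetry $y\leftrightarrow v$ one gets $b+c>1$. On the antidiagonal strip $\{|y+v|\le 1\}$ with $|y|\to\infty$ one has $|v|\asymp|y|$ and $\phi\asymp(|y|+1)^{-a-b}$, giving $a+b>1$; this strip meets $\mathbb{R}_+^2$ only in a bounded set, which is precisely why $a+b>1$ may be dropped there. Finally, the ``triangular'' region $\{|y|\asymp|v|\asymp|y+v|\asymp R\}$ has area $\asymp R^2$ and integrand $\asymp R^{-(a+b+c)}$, forcing $a+b+c>2$.

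For sufficiency, I would decompose $\mathbb{R}^2$ dyadically. Set
\[ A_{j,k,l}=\{(y,v):\,|y|+1\asymp 2^j,\;|v|+1\asymp 2^k,\;|y+v|+1\asymp 2^l\},\qquad j,k,l\ge 0. \]
The three triangle inequalities $|y|\le|v|+|y+v|$ etc.\ force the maximum $M=\max(j,k,l)$ to be attained at least twice, up to $O(1)$. A direct geometric estimate — fix the two indices equal to $M$ and bound the cross-section where the third quantity has its prescribed size — gives
\[ |A_{j,k,l}|\ll 2^{M+m},\qquad m=\min(j,k,l). \]
Since $\phi\asymp 2^{-aj-bk-cl}$ on $A_{j,k,l}$, this yields
\[ \int_{\mathbb{R}^2}\phi\;\ll\;\sum_{j,k,l\ge 0}2^{M+m-aj-bk-cl}. \]
Splitting according to which of the pairs $\{j,k\}$, $\{j,l\}$, $\{k,l\}$ realizes $M$ reduces the triple sum to three nested geometric series in $M$ and the free minimal index $m$. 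In each branch the associated pair condition ($a+b>1$, $a+c>1$, $b+c>1$ respectively) controls the sub-case where $m$ stays bounded, while the condition $a+b+c>2$ is precisely what is needed when $m$ grows proportionally to $M$. On $\mathbb{R}_+^2$ one has $|y+v|+1\ge\max(|y|,|v|)+1$, so $l\ge\max(j,k)$ and the branch $\{j,k\}=M$ with $l<M$ is empty; this kills the only branch where $a+b>1$ is invoked.

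The only real obstacle is the routine bookkeeping in this case analysis: one must verify in each of the three branches that the inner geometric sum over $m$ and the outer sum over $M$ are jointly summable exactly under the stated pair inequality together with $a+b+c>2$, with no hidden extra constraints. Everything else is elementary, and the reduction from $\mathbb{R}^2$ to $\mathbb{R}_+^2$ amounts simply to the disappearance of one branch.
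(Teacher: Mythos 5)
Your proposal is correct, and the necessity half is essentially the paper's: the paper likewise tests the three strips $\{|y|\le A\}$, $\{|v|\le A\}$, $\{|y+v|\le A\}$ (each forcing one pair condition, with the last one meeting ${\Bbb R}_+^2$ only in a bounded set, which is exactly why $a+b>1$ can be dropped there). Where you diverge is the sufficiency/remote region. The paper covers the set where all three quantities are large by sectors such as $\{v\ge y\ge 1\}$ and homogenizes via the substitution $v=yr$, obtaining
$$\phi(y,v)\,dy\,dv\asymp y^{1-a-b-c}\,r^{-b}(r+1)^{-c}\,dy\,dr,$$
so that the integral factors and the conditions $a+b+c>2$ and $b+c>1$ are read off as necessary and sufficient on that sector in a single stroke; the other sectors follow by symmetry and sign changes. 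You instead run a three-index dyadic decomposition with the triangle-inequality constraint that the maximum index is attained twice, the measure bound $|A_{j,k,l}|\ll 2^{M+m}$, and a case split over which pair realizes the maximum; I have checked that each of your three branches sums precisely under the stated pair condition together with $a+b+c>2$, and that on ${\Bbb R}_+^2$ the constraint $l\ge\max(j,k)+\cal{O}(1)$ empties the one branch that needs $a+b>1$, so the bookkeeping does close. The paper's substitution is shorter and identifies the sharp conditions region by region; your dyadic scheme is more systematic and would extend with no new ideas to products of more than three such factors (an analog for sums over zeros is in fact used later in the paper), at the cost of the case analysis you acknowledge.
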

For verification let $A>0$. On the subset of ${\Bbb R}^2$
where $|y|\le A$, the condition $b+c>1$ is necessary and
sufficient for a finite $v$-integral. Similarly for $|v|\le
A$ and the condition $a+c>1$. When $|y+v|\le A$ one needs
the condition $a+b>1$ in the case of ${\Bbb R}^2$. For the
fourth condition one looks at the set where $v\ge y\ge 1$.
Setting $v=yr$ with new variable $r$, 
$$\phi(y,v)dydv\asymp y^{-a}(yr)^{-b}\{y(r+1)\}^{-c}ydydr,$$
and the right-hand side is integrable over the set
$\{1<y<\infty,\,1<r<\infty\}$ if and only if $b+c>1$
and $a+b+c>2$. Similarly when $y$ and $v$ have opposite
sign; one may of course assume that $|y+v|>1$ then. 
 
We turn to Proposition \ref{prop:4.1}. Setting $z=x+iy$,
$w=u+iv$, Stirling's formula and (\ref{eq:2.9}) give the
following majorant for the integrand in (\ref{eq:4.2}) on
the remote parts of the paths $L(c,B)$: 
$$(|y|+1)^{c_1-1/2}(|v|+1)^{c_1-1/2}(|y+v|+1)^{-2c_1-7/2}.$$
For integrability one thus needs $-3<c_1<0$. The more
stringent requirements in the proposition 
serve to justify inversion of the order of integration in a
triple integral and to keep the paths within the strip
$\{-3<X<1\}$ where $M^\la(Z)$ is known to be regular.

\setcounter{equation}{0}    
\section{The complex integral for $T^\la(s)$} \label{sec:5}
The function $T^\la(s)$ in (\ref{eq:2.5}) is
defined by the integral below for $\si>1+|c_1|$, while for
$s$ with smaller real part it is defined by analytic
continuation;
\begin{align}  \label{eq:5.1}
\quad T^\la(s) &=
\int_{L(c,B)}\Ga(z)\frac{\ze'(z+s)}{\ze(z+s)}\,dz
\int_{L(c,B)}\Ga(w)\frac{\ze'(w+s)}{\ze(w+s)}\,
\cdot \notag \\ & \qquad\cdot 
M^\la(z+w)\cos\{\pi(z-w)/2\}dw.
\end{align}
\begin{theorem} \label{the:5.1}
Let $-3/2<c_1<0<c_2<1/2$. Then the integral
$(\ref{eq:5.1})$ defines $T^\la(s)$ as a holomorphic
function of $s=\si+i\tau$ for $\si>1-c_1$. Assuming RH, the
integral gives $T^\la(s)$ as a holomorphic function for
$\si>\max\{(1/2)-c_1,1-c_2\}$ and $|\tau|<B$. 

The integral has an analytic continuation to the half-plane 
$\{\si>1/2\}$ given by the expansion
\begin{equation} \label{eq:5.2}
T^\la(s)=\sum_{k,l}\La(k)\La(l)k^{-s}l^{-s}E^\la(k-l).
\end{equation}
\end{theorem}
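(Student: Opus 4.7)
The plan is to proceed in two stages: first establish holomorphy of the defining integral (\ref{eq:5.1}) in a suitable right half-plane, then derive the series expansion (\ref{eq:5.2}) and use its absolute convergence to reach $\si > 1/2$.

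For holomorphy without RH, I would fix the path $L(c,B)$ and observe that whenever $\si > 1 - c_1$, one has ${\rm Re}\,(z+s) \geq \si + c_1 > 1$ at every point of the path, so $\ze'(z+s)/\ze(z+s)$ is holomorphic there and bounded by $O(\log(|{\rm Im}\,(z+s)|+1))$. The integrand is then controlled by combining Stirling's formula for the $\Ga$-factors, the majorization (\ref{eq:2.9}) for $M^\la(z+w)$, and the elementary bound $|\cos\{\pi(z-w)/2\}| \ll e^{\pi|y-v|/2}$ with $z = x+iy$, $w = u+iv$. On the half-lines (where $x = u = c_1$) the exponential factors $e^{-\pi(|y|+|v|)/2}$ from $\Ga(z)\Ga(w)$ combine with $e^{\pi|y-v|/2}$ to yield $e^{-\pi\min(|y|,|v|)}$ when $y$ and $v$ have the same sign (immediately integrable) and merely $1$ when they have opposite signs. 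In the opposite-sign regime, integrability is ensured by the polynomial factor $(|y|+1)^{c_1-1/2}(|v|+1)^{c_1-1/2}(|y+v|+1)^{-2c_1-7/2}$, which satisfies the hypotheses of Lemma \ref{lem:4.2} precisely because $c_1 > -3/2$. Logarithmic factors from $\ze'/\ze$ are harmlessly absorbed, and holomorphy in $s$ follows by differentiation under the integral sign, with the majorant locally uniform in $s$.

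For the RH assertion, I would exploit the geometry of $L(c,B)$: on the central segment at ${\rm Re}\,z = c_2$ the hypothesis $\si > 1 - c_2$ keeps ${\rm Re}\,(z+s) > 1$, avoiding the pole of $\ze'/\ze$ at $1$; on the half-lines at ${\rm Re}\,z = c_1$ the hypothesis $\si > 1/2 - c_1$ keeps ${\rm Re}\,(z+s) > 1/2$, which under RH lies strictly to the right of every complex zero of $\ze$. The restriction $|\tau| < B$ ensures ${\rm Im}\,(z+s) \neq 0$ on the half-lines and that the two horizontal connecting segments at $y = \pm B$ sit off the real axis, so $z + s = 1$ is always avoided. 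The growth of $\ze'/\ze$ to the right of the critical line is worse than to the right of $\{{\rm Re} = 1\}$, but the standard RH bound (a power of $\log|{\rm Im}|$) is still mild enough to be absorbed by the decay already in place.

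For the expansion (\ref{eq:5.2}), I would first work in the unconditional region $\si > 1 - c_1$, where the Dirichlet series $-\ze'(z+s)/\ze(z+s) = \sum_n \La(n) n^{-z-s}$ converges absolutely along the entire path. Substituting both copies into (\ref{eq:5.1}) produces a double series whose termwise absolute majorant is integrable: the factor $\Ga(z)\Ga(w) M^\la(z+w) \cos\{\pi(z-w)/2\}$ is handled exactly as above, and pulling the sums outside via Tonelli contributes only the finite product $\bigl(\sum_k \La(k) k^{-\si - c_1}\bigr) \bigl(\sum_l \La(l) l^{-\si - c_1}\bigr)$. Fubini then permits the interchange, and Proposition \ref{prop:4.1} identifies the inner double integral for each pair $(k,l)$ as $E^\la(k-l)$, yielding (\ref{eq:5.2}). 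To extend to $\si > 1/2$, I would observe that the series (\ref{eq:5.2}) is itself absolutely convergent there: the compact support of $E^\la$ restricts $l$ to $|l-k| \leq \la$, leaving $O(\la)$ terms for each $k$, and with $\La(n) = O(\log n)$ the sum is dominated by $\sum_k (\log k)^2 k^{-2\si}$, which converges when $\si > 1/2$. This provides the analytic continuation. The principal technical obstacle is the Fubini step: one must verify that all the exponential and polynomial decays conspire to produce an integrable majorant in the opposite-sign regime of the double path integration even after the double sum over $k,l$ has been brought inside.
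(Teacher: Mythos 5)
Your proposal is correct and follows essentially the same route as the paper: holomorphy of the integral via the path geometry (keeping $z+s$ away from the pole at $1$ and, under RH, from the zeros), absolute convergence via Stirling, the bound (\ref{eq:2.9}) and Lemma \ref{lem:4.2}, then termwise substitution of the Dirichlet series for $\ze'/\ze$ together with Proposition \ref{prop:4.1} to obtain (\ref{eq:5.2}), whose absolute convergence for $\si>1/2$ (thanks to the compact support of $E^\la$) gives the continuation. The only quibble is that the condition $c_1>-3/2$ is needed chiefly to keep ${\rm Re}\,(z+w)=2c_1$ inside the strip $\{-3<{\rm Re}>-3\}$... rather, inside the half-plane $\{{\rm Re}>-3\}$ where $M^\la$ is regular, not for the integrability in Lemma \ref{lem:4.2}, which would already hold for $c_1>-3$.
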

\begin{proof}[Discussion]
For $z\in L(c,B)$ and $\si>1-c_1$, the sum
$z+s$ will stay away from the poles of $\ze'/\ze$. Under
RH the same holds when $\si>\max\{(1/2)-c_1,1-c_2\}$
and $|\tau|<B$. Indeed, in that case $x+\si>1/2$ and also
$z+s\ne 1$: if $x+\si=1$, then $z$ must lie on the part of
$L(c,B)$ where $|y|\ge B$, and then $y+\tau\ne 0$. Similarly
for $w=u+iv$. The absolute convergence of the repeated
integral in (\ref{eq:5.1}) can be proved in the same way as
that of (\ref{eq:4.2}). Indeed, the quotient $(\ze'/\ze)(Z)$
grows at most logarithmically in $Y$ for $X\ge 1$, and for
$X\ge (1/2)+\eta$ under RH; cf.\ Titchmarsh \cite{Ti86}. The
holomorphy of the integral for $T^\la(s)$ now follows from
locally uniform convergence in $s$. 

For the second part we substitute the Dirichlet series for
$(\ze'/\ze)(\cdot)$ into (\ref{eq:5.1}),
initially taking $\si>1-c_1$. Integrating term by term and 
applying Proposition \ref{prop:4.1} one obtains the
expansion (\ref{eq:5.2}). Because $E^\la(k-l)\ne 0$
only for finitely many values of $k-l$, the
series represents a holomorphic function for $\si>1/2$;
cf.\ the proof of Theorem \ref{the:5.2} below. The sum of
the series provides an (the) analytic continuation of the
integral to the half-plane $\{\si>1/2\}$. 
\end{proof}
We will now derive (\ref{eq:2.5}).
\begin{theorem} \label{the:5.2}
For arbitrary $\la>0$ and $\si>1/2$,
\begin{equation} \label{eq:5.3}
T^\la(s)=D_0(s)+2\sum_{0<2r\le\la}\,E(2r/\la)D_{2r}(s)
+H^\la_1(s),
\end{equation}
where $H^\la_1(s)$ is holomorphic for $\si>0$. On the real
interval $\{1/2\le s\le 3/4\}$ one has
$H^\la_1(s)=\cal{O}(\la)$ as $\la\to \infty$.
\end{theorem}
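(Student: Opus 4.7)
My plan is to start from the Dirichlet-series representation of Theorem~\ref{the:5.1},
\[ T^\la(s)=\sum_{k,l\ge 1}\La(k)\La(l)k^{-s}l^{-s}E^\la(k-l)\qquad(\si>1/2), \]
and split the double sum according to the value of the difference $d=k-l$. Because $E^\la$ is supported in $[-\la,\la]$, only finitely many differences $d$ appear, so no interchange issues arise.

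The diagonal $d=0$ yields $E(0)\sum_n\La(n)^2n^{-2s}=D_0(s)$, since $E(0)=1$. For each even $d=2r$ with $0<2r\le\la$, the substitution $l=n$, $k=n+2r$ identifies the inner sum with $E(2r/\la)D_{2r}(s)$ through the definition~(\ref{eq:2.3}); the negative range $d=-2r$ contributes the same amount by the symmetry $k\leftrightarrow l$, producing the factor $2$ in the main term of~(\ref{eq:5.3}). What remains is the sum over odd differences,
\[ H^\la_1(s)=\sum_{\substack{k-l\,\text{odd}\\|k-l|\le\la}}\La(k)\La(l)(kl)^{-s}E^\la(k-l), \]
and the task is then to show that this is holomorphic for $\si>0$ and is $\cal{O}(\la)$ on $[1/2,3/4]$.

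The key observation is that $\La(k)\La(l)\ne 0$ combined with $k-l$ odd forces exactly one of $k,l$ to be a power of $2$ and the other a power of an odd prime. By the $k\leftrightarrow l$ symmetry, $|H^\la_1(s)|$ is bounded by twice
\[ (\log 2)\sum_{a\ge 1}(2^a)^{-\si}I_a(\si),\qquad I_a(\si)=\sum_{\substack{l\text{ prime power}\\|l-2^a|\le\la}}\La(l)l^{-\si}. \]
I would split the outer sum at $2^a=2\la$. In the range $2^a\le 2\la$ one has $l^{-\si}\le 1$, so Chebyshev's bound $\psi(Y)\ll Y$ gives $I_a(\si)\ll\la$; summed geometrically, these $\cal{O}(\log\la)$ terms contribute $\cal{O}(\la)$ for every $\si>0$. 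In the range $2^a>2\la$ one has $l\asymp 2^a$, so $I_a(\si)\asymp(2^a)^{-\si}(\psi(2^a+\la)-\psi(2^a-\la))$; a local sieve bound $\psi(2^a+\la)-\psi(2^a-\la)\ll\la$ (a consequence of Brun--Titchmarsh) yields $I_a(\si)\ll\la(2^a)^{-\si}$, and the resulting tail $\la\sum_{a>\log_2(2\la)}(2^a)^{-2\si}$ is $\cal{O}(1)$ uniformly for $\si\ge 1/2$. Combining the two ranges proves both the holomorphy of $H^\la_1(s)$ on $\{\si>0\}$ and the bound $H^\la_1(s)\ll\la$ on $[1/2,3/4]$.

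The only subtle point is the local prime-power estimate required when $2^a\gg\la$: the global Chebyshev bound alone would make the outer sum diverge at $\si=1/2$. I expect this to be the main technical step of the proof, though Brun--Titchmarsh (or any equivalent elementary upper-bound sieve) is amply sufficient.
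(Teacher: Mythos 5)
Your proposal follows the paper's proof exactly: expand the Dirichlet series (\ref{eq:5.2}), sort the terms by the difference $k-l$, identify the diagonal with $D_0(s)$ and the even differences with $2E(2r/\la)D_{2r}(s)$, and observe that an odd difference with $\La(k)\La(l)\ne 0$ forces one of $k,l$ to be a power of $2$. You supply a more careful estimate of the odd-difference remainder than the paper does (which merely asserts the bound, and whose proof text in fact says $\cal{O}(\la\log\la)$ where the theorem claims $\cal{O}(\la)$); your local count of prime powers in the window $[2^a-\la,\,2^a+\la]$ --- for which even the trivial bound $\ll\la\log(2^a)$ suffices, no sieve needed --- correctly yields the stated $\cal{O}(\la)$ on $[1/2,3/4]$.
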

\begin{proof}
Taking $k=l$ in (\ref{eq:5.2}) one obtains the term
$D_0(s)$ in (\ref{eq:5.3}). For $|k-l|=2r$ one
obtains a constant multiple of $D_{2r}(s)$. The coefficient
is different from $0$ only if $2r<\la$ and in fact equal
to $2E(2r/\la)$. If
$|k-l|=2r-1$ one can have $\La(k)\La(l)\ne 0$ only if either
$k$ or $l$ is of the form $2^\al$ for some $\al>0$. The
resulting functions, for which $2r-1$ must be $<\la$, are
holomorphic for $\si>0$. For real $s\searrow 1/2$ the sum
$H^\la_1(s)$ of their values will be $\cal{O}(\la\log\la)$
as $\la\to \infty$.
\end{proof}
It remains to determine the analytic character of $D_0(s)$:
\begin{lemma} \label{lem:5.3}
One has
\begin{equation} \label{eq:5.4}
D_0(s)=\sum_{k=1}^\infty\,\frac{\La^2(k)}{k^{2s}}
=\frac{1/4}{(s-1/2)^2}+H_0(s),
\end{equation}
where $H_0(s)$ is analytic for $\si\ge 1/2$, and for
$\si>1/4$ under RH. 
\end{lemma}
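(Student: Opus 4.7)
The plan is to express $D_0(s)$ through logarithmic differentiation of $-\ze'(w)/\ze(w)=\sum_n \La(n)n^{-w}$, after first reducing $\La(n)^2$ to $\La(n)\log n$, and then to extract the double pole at $s=1/2$ by Laurent-expanding $\ze$ near $w=2s=1$.

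First I would use the identity $\La(n)\log n - \La(n)^2 = (\al-1)(\log p)^2$, valid for $n=p^\al$, to write
$$D_0(s) = \sum_{n=1}^\infty \frac{\La(n)\log n}{n^{2s}} - \sum_p \sum_{\al\ge 2}\frac{(\al-1)(\log p)^2}{p^{2\al s}}.$$
The double sum on the right is dominated (termwise in $\al$) by $\sum_p(\log p)^2 p^{-4\si}(1-p^{-2\si})^{-2}$ and so is holomorphic on $\{\si>1/4\}$ without any hypothesis; it can be absorbed into $H_0(s)$ in either regime.

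The main term is handled by differentiating $-\ze'(w)/\ze(w)=\sum\La(n)n^{-w}$ with respect to $w$, which gives
$$\sum_{n=1}^\infty \La(n)\log n \cdot n^{-w} = \frac{\ze''(w)}{\ze(w)} - \left(\frac{\ze'(w)}{\ze(w)}\right)^2.$$
With $w=2s$, this is meromorphic wherever $\ze(2s)\ne 0$. The de la Vall\'ee Poussin zero-free region $\{\mathrm{Re}\,w\ge 1\}$ makes the right-hand side meromorphic in an open neighborhood of $\{\si\ge 1/2\}$ with a possible pole only at $s=1/2$; under RH the sharper zero-free region $\{\mathrm{Re}\,w>1/2\}$ extends meromorphy to $\{\si>1/4\}$.

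To extract the singularity, I would Laurent-expand near $w=1$: from $\ze(w) = (w-1)^{-1}+\ga+\ga_1(w-1)+O((w-1)^2)$ one obtains $\ze'/\ze(w) = -(w-1)^{-1}+\ga+O(w-1)$ and $\ze''/\ze(w) = 2(w-1)^{-2} - 2\ga(w-1)^{-1}+O(1)$. Squaring the first and subtracting, the $(w-1)^{-1}$ terms cancel and one finds
$$\frac{\ze''(w)}{\ze(w)}-\left(\frac{\ze'(w)}{\ze(w)}\right)^2 = \frac{1}{(w-1)^2}+O(1),$$
so $w=2s$ yields the principal part $(1/4)/(s-1/2)^2$. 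The only subtle step is the cancellation of the simple-pole contributions of $\ze''/\ze$ and $(\ze'/\ze)^2$, which I expect to be the main bookkeeping obstacle; once that is verified, the coefficient $1/4$ emerges automatically from $(2s-1)^{-2}=(1/4)(s-1/2)^{-2}$, and the claim follows.
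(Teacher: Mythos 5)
Your proof is correct and follows essentially the same route as the paper: both reduce $\sum_k\La^2(k)k^{-z}$ to $\frac{d}{dz}\{\ze'(z)/\ze(z)\}$ modulo a Dirichlet series holomorphic for ${\rm Re}\,z>1/2$ (you via the identity $\La(n)^2=\La(n)\log n-(\al-1)\log^2p$, the paper by dropping and re-inserting prime powers), and then read off the double pole $1/(z-1)^2$ at $z=2s=1$. Your "main bookkeeping obstacle" is in fact automatic if you differentiate the Laurent expansion $\ze'/\ze=-(w-1)^{-1}+O(1)$ directly rather than expanding $\ze''/\ze$ and $(\ze'/\ze)^2$ separately.
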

Indeed, for $x>1$
\begin{align} &
\sum_k\La^2(k)k^{-z} = \sum_p(\log^2 p)p^{-z}+H_1(z)
= -\frac{d}{dz}\sum_p(\log p)p^{-z}+H_1(z) \notag \\ &=
-\frac{d}{dz}\sum_k\La(k)k^{-z}+H_2(z) =
\frac{d}{dz}\frac{\ze'(z)}{\ze(z)}+H_2(z)=
\frac{1}{(z-1)^2}+H_3(z),\notag
\end{align}
%\begin{align} &
%\sum_k\La^2(k)k^{-z} = \sum_p(\log^2 p)p^{-z}+H_1(z)
%\notag \\ &= -\frac{d}{dz}\sum_p(\log p)p^{-z}+H_1(z)=
%-\frac{d}{dz}\sum_k\La(k)k^{-z}+H_2(z) \notag \\ &=
%\frac{d}{dz}\frac{\ze'(z)}{\ze(z)}+H_2(z)=
%\frac{1}{(z-1)^2}+H_3(z),\notag
%\end{align}
where $H_1(z)$ and $H_2(z)$ define holomorphic functions for
$x>1/2$, while $H_3(z)$ is holomorphic for $x\ge 1$, and
for $x>1/2$ under RH. Finally take $z=2s$.

\setcounter{equation}{0}    
\section{Transformation of the integral for $T^\la(s)$}
\label{sec:6}  
Taking $c_1,\,c_2$ and $s$ as in the first part of
Theorem \ref{the:5.1} we will move the paths of integration
in the integral for $T^\la(s)$, but first change variables.
Replacing $z$ by $z'-s$ and $w$ by $w'-s$ (and subsequently
dropping the primes), one obtains
\begin{align} \label{eq:6.1}
T^\la(s) & = \int_{L(c',B')}\Ga(z-s)
\frac{\ze'(z)}{\ze(z)}\,dz
\int_{L(c',B')}\Ga(w-s)\frac{\ze'(w)}{\ze(w)}\,
\cdot \notag \\ & \quad\; \cdot
\,M^\la(z+w-2s)\cos\{\pi(z-w)/2\}dw.
\end{align}
Here the paths of integration will initially depend on $s$:
$c'_1=c_1+s$, $c'_2=c_2+s$, and the horizontal segments 
may be at different distances from the real axis. However,
by our standard estimates and Cauchy's theorem, one may
fix $c'_1=1$ and $c'_2=3/2$, say, use a constant $B'$
and take $1<\si<3/2$, $|\tau|<B'$. Observe that henceforth,
the point $s$ will be to the left of the paths. 

Starting with (\ref{eq:6.1}), where we rename
$c'_1=1=c_1$, $c'_2=3/2=c_2$ and $B'=B$, the paths of
integration $L(c,B)$ will be moved across the poles
$s$, $1$ and $\rho$ to the line $L(0)$, the imaginary axis.
We first describe what happens when we move the
$w$-path: 
\begin{equation} \label{eq:6.2}
T^\la(s) = \int_{L(c,B)}\cdots\,dz\int_{L(0)}\cdots\,dw
+U^\la_2(s)=U^\la_1(s) +U^\la_2(s),
\end{equation}
say, where by the residue theorem
\begin{equation} \label{eq:6.3}
U^\la_2(s) = \int_{L(c,B)}
\Ga(z-s)\frac{\ze'(z)}{\ze(z)}\,J(z,s)dz,
\end{equation}
with
\begin{align} \label{eq:6.4}
J(z,s) &=
\frac{\ze'(s)}{\ze(s)}\,M^\la(z-s)\cos\{\pi(z-s)/2\} 
\notag \\ &
\quad -\Ga(1-s)M^\la(z+1-2s)\cos\{\pi(z-1)/2\} \\ &
\quad +\sum_\rho\,\Ga(\rho-s)M^\la(z+\rho-2s)
\cos\{\pi(z-\rho)/2\}.\notag
\end{align}
By the usual estimates and Lemma \ref{lem:4.2} the
repeated integral $U^\la_1(s)$ defines a holomorphic
function for $1/2<\si<3/2$, hence by Theorem
\ref{the:5.1}, the function $U^\la_2(s)$ must have an
analytic continuation to the same strip! 

For $1<\si<3/2$ the function $J(z,s)$ is holomorphic in
$z$ on and between the paths $L(c,B)$ and $L(0)$. If we
define $J(z,s)$ for $z\in L(c,B)$ by continuity at 
$s=1$ and points $s=\rho$, it becomes holomorphic in $s$
for $3/4<\si<3/2$; apparent poles cancel each other. 
What can we say about the integral for $U^\la_2(s)$? The
critical part is the one that 
corresponds to the sum over $\rho=\be\pm i\ga$ in
(\ref{eq:6.4}). For $|y|>B\ge 2$ its integrand is majorized
by
$$\sum_\rho |y|^{c_1-\si-1/2}(\log|y|)|\rho|^{\be-\si-1/2}
(|y+\rho|+1)^{-c_1-\be+2\si-7/2}.$$
Since $c_1=1$, $0<\be<1$ and $|\rho_n|\sim 2\pi n/\log n$ as
$n\to\infty$, the analog of Lemma \ref{lem:4.2} for the
integral of a sum proves the absolute convergence and
holomorphy of the integral when $3/4<\si<3/2$.

To justify the above application of the residue theorem
one may start with $w$-integrals over a sequence of
closed contours $W_R$, $B<R=R_k\to\infty$, whose upper part
is shown in Figure \ref{fig:2}. 
\begin{figure}[htb]
$$\includegraphics[width=4cm]{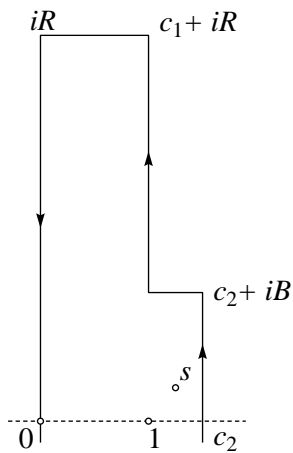}
$$
\caption{Upper half of $W_R$ \label{fig:2}}
\end{figure}
Here the numbers $R=R_k$ are chosen `away from the numbers
$\ga_n$', in the sense that on the horizontal segments
$\{v=\pm R\}$ one has $\ze'(w)/\ze(w)\ll \log^2|v|$.
One may require that $R_k\in(k,k+1)$, $k=1,2,\cdots$; 
cf.\ the expansion of $(\ze'/\ze)(\cdot)$ and ways of
estimating this quotient in Titchmarsh \cite{Ti86}. One can
now use the standard majorants to show that for
$3/4<\si<3/2$,
\begin{align} & \label{eq:6.5}
\int_{L(c,B)}\Ga(z-s)\frac{\ze'(z)}{\ze(z)}\,dz
\int_{c_1+iR}^{iR}\Ga(w-s)\frac{\ze'(w)}{\ze(w)}\,
\cdot \notag \\ & \quad\cdot
\,M^\la(z+w-2s)\cos\{\pi(z-w)/2\}dw\to
0\quad\mbox{as}\;\;R\to \infty.
\end{align}
Similarly for the segment where $v=-R$. We summarize what
we have found so far:
\begin{proposition} \label{prop:6.1}
For $1<\si<3/2$ and $|\tau|<B$ the integral for $T^\la(s)$
admits a holomorphic decomposition $T^\la(s)
= U^\la_1(s)+U^\la_2(s)$, see $(\ref{eq:6.2})$, in which
both integrals $U^\la_j(s)$ define analytic functions for
$3/4<\si<3/2$.
\end{proposition}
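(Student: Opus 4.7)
The strategy is already essentially laid out in the discussion preceding the proposition; the task is to organize it into four verifiable steps and pin down the regions of convergence.

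First, I would fix $z\in L(c,B)$ (with $c_1=1$, $c_2=3/2$ as renamed) and, for $s$ in the initial strip $1<\si<3/2$, $|\tau|<B$, shift the inner $w$-contour from $L(c,B)$ to the imaginary axis $L(0)$. Between the two contours the integrand has simple poles at $w=s$ (from $\Ga(w-s)$), at $w=1$ (from $\ze'/\ze$), and at $w=\rho$ for each nontrivial zero of $\ze$. Computing the residues gives exactly the three terms appearing in $J(z,s)$ in (\ref{eq:6.4}), thereby producing the decomposition $T^\la(s)=U^\la_1(s)+U^\la_2(s)$ of (\ref{eq:6.2})--(\ref{eq:6.3}).

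Second, the contour shift must be justified through the truncated contours $W_R$ of Figure \ref{fig:2}. I would choose $R=R_k\in(k,k+1)$ on which $\ze'(w)/\ze(w)\ll\log^2 R$ on the horizontal segments $\{v=\pm R\}$ (a standard property, cf.\ Titchmarsh \cite{Ti86}). Combining this logarithmic bound with Stirling for $\Ga(w-s)$ and the polynomial decay (\ref{eq:2.9}) for $M^\la$, I would verify the vanishing statement (\ref{eq:6.5}) as $R_k\to\infty$, justifying the residue computation term by term.

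Third, for $U^\la_1(s)$: since the inner path is $L(0)$, the variable $w=iv$ stays away from the poles of $\ze'/\ze$, and the functional equation yields logarithmic growth of $\ze'(iv)/\ze(iv)$. Applying Stirling to $\Ga(z-s)$ and $\Ga(w-s)$ together with (\ref{eq:2.9}) for $M^\la(z+w-2s)$, the modulus of the integrand on $L(c,B)\times L(0)$ is majorized (up to logarithmic factors) by
\[
(|y|+1)^{c_1-\si-1/2}(|v|+1)^{-\si-1/2}(|y+v|+1)^{-c_1-\si-7/2}.
\]
Lemma \ref{lem:4.2} yields integrability throughout $1/2<\si<3/2$ (in particular on the smaller strip of the proposition), and locally uniform convergence gives holomorphy.

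Fourth, for $U^\la_2(s)$: the three terms of $J(z,s)$ in (\ref{eq:6.4}) have apparent poles in $s$ at $s=1$ and at each $s=\rho$, but the residues match so that $J(z,s)$ is holomorphic in $s$ for $3/4<\si<3/2$ after continuous extension. The critical part of the integrand of $U^\la_2(s)$ on $L(c,B)$ is the sum over $\rho=\be\pm i\ga$, which is bounded by
\[
\sum_\rho |y|^{c_1-\si-1/2}(\log|y|)\,|\rho|^{\be-\si-1/2}(|y+\mathrm{Im}\,\rho|+1)^{-c_1-\be+2\si-7/2}.
\]
Using $|\rho_n|\sim 2\pi n/\log n$ together with a version of Lemma \ref{lem:4.2} for integrals of sums, this converges absolutely once $\si>3/4$, giving the claimed analyticity of $U^\la_2$. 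The holomorphy of $T^\la$ on $1<\si<3/2$ (Theorem \ref{the:5.1}) together with that of $U^\la_1$ then forces $U^\la_2$ to agree with the analytically continued integral on the overlap, sealing the decomposition.

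The main obstacle is the fourth step: one must simultaneously verify the analytic cancellation of the apparent poles of $J(z,s)$ at $s=1$ and $s=\rho$, and produce a uniform (in $\rho$) majorization strong enough to interchange the sum over zeros with the $z$-integration. The constraint $\si>3/4$ is precisely the break-even point between the loss $|\rho|^{\be-\si-1/2}$ coming from $\Ga(\rho-s)$ and the gain $(|y+\mathrm{Im}\,\rho|+1)^{-c_1-\be+2\si-7/2}$ coming from the decay (\ref{eq:2.9}) of $M^\la$, so the argument has no room to spare without assuming RH.
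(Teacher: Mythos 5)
Your proposal follows the paper's own argument step for step: the same shift of the $w$-contour picking up residues at $w=s$, $w=1$ and $w=\rho$, the same justification via the truncated contours $W_R$ with $\ze'/\ze\ll\log^2 R$ on the horizontal segments, and the same majorants and Lemma \ref{lem:4.2} thresholds for $U^\la_1$ and $U^\la_2$ (including the cancellation of the apparent poles of $J(z,s)$ and the break-even at $\si=3/4$). The only slip is in your displayed majorant for $U^\la_1$: since $\mathrm{Re}(z+w-2s)=c_1-2\si$ on the remote parts of the paths, estimate (\ref{eq:2.9}) gives the exponent $-c_1+2\si-7/2$ on the factor $(|y+v|+1)$ rather than the stronger $-c_1-\si-7/2$ you wrote, but the corrected exponent still satisfies the hypotheses of Lemma \ref{lem:4.2} throughout $1/2<\si<3/2$, so the conclusion is unaffected.
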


\medskip
In a second step, inverting order of integration where
necessary, the $z$-paths $L(c,B)$ in the
integrals (\ref{eq:6.2}), (\ref{eq:6.3}) for $U^\la_1(s)$
and $U^\la_2(s)$ are moved to $L(0)$. Again using the
residue theorem, this results in the decomposition
\begin{align} \label{eq:6.6} 
T^\la(s) &=
\int_{L(0)}\,\cdots\,dz\int_{L(0)}\,\cdots\,dw
+2\int_{L(0)}\,\Ga(z-s)\frac{\ze'(z)}{\ze(z)}\,
J(z,s)dz \notag \\ & 
+\bigg\{\frac{\ze'(s)}{\ze(s)}\,J(s,s) - \Ga(1-s)J(1,s)+
\sum_{\rho'}\,\Ga(\rho'-s)J(\rho',s)\bigg\}.
\end{align}
The new integrals will define holomorphic functions for
$0<\si<1$. In the case of the single integral involving
the sum over $\rho=\be+i\ga$ in $J(z,s)$, where $\be$ might
be close to $1$, this can be seen by moving the integral
over to $L(-\eta)$ with variable $\eta\in(0,1)$. On the 
interval $\{1/2\le s\le 3/4\}$ the integrals are $\cal{O}(1)$ 
as $\la\to\infty$.

We now turn to the {\it big residue} in the last line of
(\ref{eq:6.6}). With the aid of (\ref{eq:6.4}) it leads to
nine terms. Four of these combine into the three terms of
$\Si^\la(s)$ in (\ref{eq:3.1}); here the sum
$\Si^\la_2(s)$ of the double series has to be formed in a
suitable way as described below. Another term gives the
important constituent 
\begin{equation} \label{eq:6.7}
U^\la_3(s)\stackrel{\mathrm{def}}{=}\Ga^2(1-s)M^\la(2-2s).
\end{equation}
By Proposition \ref{prop:2.1} the
function $U^\la_3(s)$ is meromorphic for $0<\si<1$, with
just one pole, a first-order pole at $s=1/2$. Since
$M^\la(z)$ has residue $-(2\la/\pi)A^E$ at $z=1$,
expansion about $s=1/2$ gives
\begin{equation} \label{eq:6.8}
U^\la_3(s)=\frac{A^E\la}{s-1/2}+H^\la_2(s),
\end{equation}
where $A^E=\int_0^1 E(\nu)d\nu$ and $H^\la_2(s)$ is
holomorphic for $0<\si<1$. On the interval $\{1/2\le s\le
3/4\}$ one has $H^\la_2(s)=\cal{O}(\la\log\la)$ as $\la\to
\infty$. 

The other four terms provided by the last line of
(\ref{eq:6.6}) combine to
\begin{align} \label{eq:6.9}
U^\la_4(s) &=
-2\Ga(1-s)\frac{\ze'(s)}{\ze(s)}\,M^\la(1-s)\sin(\pi s/2)
\notag \\ & \quad - 
2\Ga(1-s)\sum_\rho\,\Ga(\rho-s)M^\la(1+\rho-2s)
\sin(\pi\rho/2).
\end{align}
This function is meromorphic for $0<\si<1$ with poles
at zeta's complex zeros that cancel one another, and
further poles at the points $s=\rho/2$. Thus $U^\la_4(s)$
is holomorphic for $1/2\le\si<1$, and for $1/4<\si<1$ under
RH. For $s\searrow 1/2$ one has $U^\la_4(s)=\cal{O}(\la)$
as $\la\to \infty$.

To justify the application of the residue theorem one now
has to show that (\ref{eq:6.5}) remains valid if $L(c,B)$
is replaced by $L(0)$, and also that 
$$\int_{1+iR}^{iR}\Ga(z-s)\frac{\ze'(z)}{\ze(z)}\,
J(z,s)dz\to 0\quad\mbox{as}\;\;R=R_k\to
\infty.$$ 
This is no problem if $1/2<\si<3/2$. Similarly for the
segment $v=-R$. 

Since $T^\la(s)$ is holomorphic for $\si>1/2$ by Theorem
\ref{the:5.1}, the discussion above implies that the
`big residue' in (\ref{eq:6.6}) represents a holomorphic
function for $1/2<\si<1$, provided the sum
over $\rho'$ is interpreted as 
$$\lim_{R=R_k\to\infty}\,\sum_{|{\rm Im\,}\rho'|<R}\,
\Ga(\rho'-s)J(\rho',s).$$
It follows that the function $\Si^\la(s)$ in (\ref{eq:3.1})
likewise represents a holomorphic function for
$1/2<\si<1$, provided the double sum $\Si^\la_2(s)$
involving $\rho$ and $\rho'$ is interpreted as a limit of
$\sum_{|{\rm Im\,}\rho'|<R}\,\sum_{\rho}$.
Under RH the double series is absolutely convergent; cf.\
Lemma \ref{lem:4.2}. Summarizing we have 
\begin{theorem} \label{the:6.2}
For $s=\si+i\tau$ with $1/2<\si<1$ there is a
holomorphic decomposition 
\begin{equation} \label{eq:6.10}
T^\la(s) = \frac{A^E\la}{s-1/2} + \Si^\la(s) +
H^\la_3(s),\quad A^E=\int_0^1 E(\nu)d\nu,
\end{equation}
where $\Si^\la(s)$ is given by $(\ref{eq:3.1})$ with
proper interpretation of the double sum. The function
$H^\la_3(s)$ has an analytic continuation to the strip
$\{1/2\le\si<1\}$, and to the strip $\{1/4<\si<1\}$ under
RH. On the interval $\{1/2\le s\le 3/4\}$ one has
$H^\la_3(s)=\cal{O}(\la\log\la)$ as $\la\to \infty$.
\end{theorem}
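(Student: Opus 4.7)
The plan is to assemble the pieces already developed in Section \ref{sec:6} into the asserted decomposition of $T^\lambda(s)$. The starting point is the identity (\ref{eq:6.6}), obtained by moving the $z$-path in both $U^\lambda_1(s)$ and $U^\lambda_2(s)$ of Proposition \ref{prop:6.1} from $L(c,B)$ to the imaginary axis $L(0)$ and collecting residues at $z=s$, $z=1$ and the complex zeros $z=\rho$ of $\zeta$. This presents $T^\lambda(s)$ as the sum of a double integral over $L(0)\times L(0)$, a single integral in $z$ along $L(0)$ whose integrand is $\Gamma(z-s)(\zeta'/\zeta)(z)J(z,s)$, and a ``big residue'' consisting of $(\zeta'/\zeta)(s)J(s,s)$, $-\Gamma(1-s)J(1,s)$, and $\sum_{\rho'}\Gamma(\rho'-s)J(\rho',s)$.

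The next step is to expand the big residue by substituting the three-term formula (\ref{eq:6.4}) for $J$, producing nine terms which I would group as follows. Four of them reconstruct $\Sigma^\lambda(s)$ of (\ref{eq:3.1}): using $M^\lambda(0)=1$ from Proposition \ref{prop:2.1} one recovers $(\zeta'(s)/\zeta(s))^2$; the two equal cross terms between $(\zeta'/\zeta)(s)$ and the $\rho$-sum (one from $J(s,s)$, one from $J(\rho',s)$, equal by evenness of cosine) combine with factor $2$ into the middle term; and the $(\rho,\rho')$ cross products give the double sum $\Sigma^\lambda_2(s)$. A fifth term, $\Gamma(1-s)\cdot\Gamma(1-s)M^\lambda(2-2s)=U^\lambda_3(s)$, yields the pole $A^E\lambda/(s-1/2)$ together with $H^\lambda_2(s)$ via (\ref{eq:6.8}). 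The remaining four terms — the two equal cross products of $(\zeta'/\zeta)(s)$ with $\Gamma(1-s)M^\lambda(1-s)$, which coalesce using $\cos\{\pi(s-1)/2\}=\cos\{\pi(1-s)/2\}=\sin(\pi s/2)$, and the two equal $\Gamma(1-s)\Gamma(\rho-s)$ cross products — add to precisely $U^\lambda_4(s)$ from (\ref{eq:6.9}), already shown to be holomorphic on $\{1/2\le\sigma<1\}$ (and on $\{1/4<\sigma<1\}$ under RH) with $\mathcal{O}(\lambda)$ growth on the real segment.

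Third, I would verify that the double integral over $L(0)\times L(0)$ and the single $z$-integral along $L(0)$ define holomorphic functions on $0<\sigma<1$, by applying Stirling's formula to the $\Gamma$-factors, the logarithmic bound on $\zeta'/\zeta$ for $x\ge 1$, and the majorization (\ref{eq:2.9}) for $M^\lambda$; absolute convergence then follows from Lemma \ref{lem:4.2}. For the single integral one shifts to $L(-\eta)$, $0<\eta<1$, and controls the sum over $\rho$ inside $J$ via the sum-version of Lemma \ref{lem:4.2} together with $|\rho_n|\sim 2\pi n/\log n$. Setting $H^\lambda_3(s)$ equal to the sum of these two integrals plus $H^\lambda_2(s)$ plus $U^\lambda_4(s)$ yields the decomposition (\ref{eq:6.10}); the growth bound $H^\lambda_3(s)=\mathcal{O}(\lambda\log\lambda)$ on $[1/2,3/4]$ is then read off by tracking the $\lambda$-dependence (the integrals contribute $\mathcal{O}(1)$, $U^\lambda_4(s)=\mathcal{O}(\lambda)$, and the $\log\lambda$ enters from expanding $\lambda^{2-2s}$ inside $U^\lambda_3$ near $s=1/2$).

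The main obstacle I expect is the rigorous justification of the $z$-contour shift across the infinite set of zeta zeros, which also dictates how the conditionally convergent double sum $\Sigma^\lambda_2$ must be read. Concretely, one must close the contour by rectangles $W_R$ with $R=R_k\in(k,k+1)$ chosen so that $(\zeta'/\zeta)(z)\ll\log^2|y|$ on $y=\pm R$, use the rapid decay of $\Gamma(z-s)$ and the $M^\lambda$-majorization to show the horizontal segments vanish as $R_k\to\infty$, and interpret $\sum_{\rho'}$ as the resulting conditional limit. Under RH absolute convergence replaces the conditional argument via Lemma \ref{lem:4.2}; without RH, one must argue more carefully that the ``big residue'' remains a well-defined holomorphic function for $1/2<\sigma<1$, which is exactly what the subtraction from the integral $T^\lambda(s)$ (itself holomorphic for $\sigma>1/2$) forces.
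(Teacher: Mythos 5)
Your proposal follows essentially the same route as the paper's own proof: starting from the decomposition (\ref{eq:6.6}), expanding the big residue via (\ref{eq:6.4}) into nine terms, grouping four of them into $\Si^\la(s)$, identifying $U^\la_3(s)=\Ga^2(1-s)M^\la(2-2s)$ as the source of the pole $A^E\la/(s-1/2)$, collecting the remaining four into $U^\la_4(s)$, and absorbing the shifted integrals (holomorphic on $0<\si<1$, with the single integral handled by moving to $L(-\eta)$) together with $H^\la_2$ and $U^\la_4$ into $H^\la_3$, with the contour shifts justified by the rectangles $W_R$ and the double sum read as a conditional limit. The grouping of terms, the growth bounds, and the treatment of the conditional convergence all match the paper's argument.
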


\setcounter{equation}{0}    
\section{Proofs for the main results in Section \ref{sec:3}}
\label{sec:7} 
\begin{proof}[Proof of Theorem \ref{the:3.1}] The
proof is obtained by combining Theorem \ref{the:5.2},
Lem\-ma \ref{lem:5.3} and Theorem \ref{the:6.2}. For
$1/2<\si<1$ these results give the following holomorphic
representations for the sum $V^\la(s)$:
\begin{align} \label{eq:7.1}
V^\la(s) &= 2\sum_{0<
2r\le\la}\,E(2r/\la)D_{2r}(s) 
= T^\la(s)-D_0(s)+H^\la_4(s) \notag \\ & 
= T^\la(s)-\frac{1/4}{(s-1/2)^2}+H^\la_5(s) \\ &=
-\frac{1/4}{(s-1/2)^2}+\frac{A^E\la}{s-1/2}+\Si^\la(s)
+H^\la_6(s).\notag
\end{align}
Here $A^E=\int_0^1 E(\nu)d\nu$ and $\Si^\la(s)$ is given by
(\ref{eq:3.1}) with suitable interpretation of the double
sum. The functions $H^\la_j(s)$ are holomorphic for 
$1/2\le\si<1$, and for $1/4<\si<1$ under RH. For the final
line of (\ref{eq:3.2}) one applies (\ref{eq:7.1}) to
$V^1(s)=0$ and subtracts the result from (\ref{eq:7.1}) for
$V^\la(s)$. 

Results on $H^\la_1(s)$ in Theorem \ref{the:5.2} and
$H^\la_3(s)$ in Theorem \ref{the:6.2} show that on the
interval $\{1/2\le s\le 3/4\}$ one has
$H^\la_6(s)=\cal{O}(\la\log\la)$ as $\la\to\infty$.
We will see below that the sum of the double series
$\Si^\la_2(s)$ in the definition of $\Si^\la(s)$ may be
formed as a limit of square partial sums.
\end{proof}
\noindent{\scshape Discussion of $\Si^\la(s)$.} 
The part of the double sum $\Si^\la_2(s)$ in which
$\ga={\rm Im}\,\rho$ and $\ga'={\rm Im}\,\rho'$ have the
same sign defines a meromorphic function for $1/2<\si<1$
whose only poles occur at complex zeros of $\ze(\cdot)$.
Indeed, for $s$ different from those zeros the series is
absolutely convergent; to verify this one may use an analog
for sums of the final part of Lemma \ref{lem:4.2}. 
\begin{proposition} \label{prop:7.1}
The function $\Si^\la_2(s)$ can be obtained as a limit of
square partial sums:
\begin{align} & \Si^\la_2(s) =\notag \\ &
\lim_{R\to\infty}\,\sum_{|\ga|,\,|\ga'|<R}
\,\Ga(\rho-s)\Ga(\rho'-s)
M^\la(\rho+\rho'-2s)\cos\{\pi(\rho-\rho')/2\},\notag
\end{align}
where $R$ `stays away' from the numbers $\ga_n$ as
described in Section \ref{sec:6}.
\end{proposition}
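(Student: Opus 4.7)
The plan is to compare the square partial sum
\[
S(R)=\sum_{|\ga|,\,|\ga'|<R}a_{\rho,\rho'},\qquad a_{\rho,\rho'}=\Ga(\rho-s)\Ga(\rho'-s)M^\la(\rho+\rho'-2s)\cos\{\pi(\rho-\rho')/2\},
\]
with the iterated sum $I(R)=\sum_{|\ga'|<R}\sum_\rho a_{\rho,\rho'}$ from Section~\ref{sec:6}. Because $a_{\rho,\rho'}=a_{\rho',\rho}$ and because $I(R)\to\Si^\la_2(s)$ along $R=R_k$ by the analysis of Section~\ref{sec:6}, it suffices to prove that the `corner'
\[
I(R)-S(R)=\sum_{|\ga'|<R}\,\sum_{|\ga|\ge R}a_{\rho,\rho'}
\]
tends to zero along the same sequence of radii.

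I would split the pairs by the signs of $\ga$ and $\ga'$. The same-sign part of the full double series is absolutely convergent, as already noted in the discussion preceding the proposition: an analog of Lemma~\ref{lem:4.2} for sums applies, exploiting the exponential decay $e^{-\pi\min(|\ga|,|\ga'|)}$ obtained after combining the Stirling bounds on $\Ga(\rho-s)\Ga(\rho'-s)$ with the cosine factor and the estimate $M^\la(\rho+\rho'-2s)\ll(|\ga+\ga'|+1)^{2\si-7/2}$ from (\ref{eq:2.9}). Consequently the same-sign contribution to the corner vanishes as $R\to\infty$.

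For opposite-sign pairs ($\ga\ga'<0$) the exponentials cancel \emph{exactly}, leaving the uniform majorant
\[
|a_{\rho,\rho'}|\ll|\ga|^{1/2-\si}|\ga'|^{1/2-\si}(|\ga+\ga'|+1)^{2\si-7/2},
\]
valid for all nontrivial zeros (using only $0\le\be,\be'\le 1$ and $1/2<\si<1$). I would estimate the corresponding corner sum by the integral against the zero-density $\asymp\log t$, splitting at $|\ga'|=|\ga|/2$: on $|\ga'|\le|\ga|/2$ one has $(|\ga+\ga'|+1)^{2\si-7/2}\le(|\ga|/2)^{2\si-7/2}$, giving a bound $\cal{O}(R^{-1/2}\log^2 R)$; on $|\ga'|>|\ga|/2$ both variables lie in $(R/2,2R)$, and integrating $(|w|+1)^{2\si-7/2}$ in the combined variable $w=\ga+\ga'$ is finite, giving $\cal{O}(R^{1-2\si}\log^2 R)$. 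Both quantities vanish as $R\to\infty$ for $\si\in(1/2,1)$.

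The main obstacle is exactly the opposite-sign regime: the double series there is \emph{not} absolutely convergent in the naive sense -- since $1/2-\si>-1$ the sum $\sum_\ga|\ga|^{1/2-\si}$ diverges, so one cannot argue termwise in a single variable. The argument must exploit the \emph{joint} polynomial decay of $M^\la(\rho+\rho'-2s)$ in the combined variable $\ga+\ga'$, and this is precisely what the smoothness hypothesis on $E$ and the ensuing estimate (\ref{eq:2.9}) supply. The requirement that $R=R_k$ avoid the $\ga_n$ only makes the set $\{|\ga|,|\ga'|<R\}$ unambiguous and matches the partial-sum convention adopted in Section~\ref{sec:6}.
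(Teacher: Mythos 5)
Your proposal is correct and follows essentially the same route as the paper: reduce to the opposite-sign pairs (the same-sign part being absolutely convergent), and show the corner sum $\sum_{|\ga'|<R}\sum_{|\ga|\ge R}$ vanishes by majorizing the terms with $(\ga\ga'')^{1/2-\si}(|\ga-\ga''|+1)^{2\si-7/2}$ and integrating against the zero density $\cal{O}(\log T)$, arriving at the same final majorant $(R^{-1/2}+R^{1-2\si})\log^2 R\to 0$. The only difference is presentational (you split at $|\ga'|=|\ga|/2$ before integrating, the paper splits the $v$-integral at $R/2$ after performing the $y$-integral), and your one-line justification of the second piece glosses over the $w$-dependent pair count, but the estimates and conclusion agree.
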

\begin{proof} By the preceding we may restrict ourselves to
the double sum $\Si^\la_3(s)$ in which $\ga$ and $\ga'$
have opposite sign; as shown in Section \ref{sec:6} it is
the limit of $\sum_{|\ga'|<R}\,\sum_{\ga;\,\ga\ga'<0}$ for
suitable $R\to\infty$. In order to prove that one can use
square partial sums it will suffice to show that for
fixed $s$ (different from the numbers $\rho'$)
with $1/2<\si<1$,
$$\sum_{-R<\ga'<0}\,\sum_{\ga>R}\,\Ga(\rho'-s)\Ga(\rho-s)
M^\la(\rho+\rho'-2s)\cos\{\pi(\rho-\rho')/2\}\to 0$$
as $R\to\infty$. Using standard
majorants and setting $\ga'=-\ga''$ this will follow if we
prove
\begin{equation} \label{eq:7.2}
\sum_{0<\ga''<R,\,\ga>R}\,(\ga\ga'')^{(1/2)-\si}
(\ga-\ga''+1)^{2\si-7/2}\to 0
\end{equation}
as $R\to\infty$. Now the number of zeta's zeros with
$T<\ga\le T+1$ is $\cal{O}(\log T)$, cf.\ Titchmarsh
\cite{Ti86}. Thus for fixed $\si\in(1/2,1)$ the double sum
in (\ref{eq:7.2}) is majorized by 
\begin{align} &
\int_2^R v^{(1/2)-\si}(\log v)dv\int_R^\infty
y^{(1/2)-\si}(y-v+1)^{2\si-7/2}(\log y)dy \notag \\ &
\ll \int_2^R v^{(1/2)-\si}(\log v)R^{(1/2)-\si}(\log R)
(R-v+1)^{2\si-5/2}dv \notag \\ & \ll
R^{(1/2)-\si}\log^2 R\int_2^R
v^{(1/2)-\si}(R-v+1)^{2\si-5/2}dv.\notag
\end{align}
In the last integral one may treat the $v$-intervals
$(1,R/2)$ and $(R/2,R)$ separately to obtain the final
majorant
$$(R^{-1/2}+R^{1-2\si})\log^2 R,\quad\mbox{which}\;\;\to
0\quad\mbox{as}\;\;R\to\infty.$$
\end{proof}
\begin{proof}[Proof of Corollary \ref{cor:3.2}] 
The proof uses induction with respect to $m\ge 1$. The
induction hypothesis is that the PPC for pairs
$(p,\,p+2r)$ is known to hold for every $r<m$. 

(i) Suppose that the PPC is also true for $r=m$. Then if
$\la$ is any given number in $(2m,2m+2]$, the PPC is true
for every $r<\la/2$. Hence for any smooth $E$, by
(\ref{eq:2.4}) the function 
\begin{equation} \label{eq:7.3}
W^\la(s) \stackrel{\mathrm{def}}{=}
2\sum_{0<2r<\la}\,E(2r/\la)
\Big(D_{2r}(s)-\frac{C_{2r}}{s-1/2}\Big)
\end{equation}
has good (local pseudofunction) boundary behavior as
$\si\searrow 1/2$. Now by (\ref{eq:3.2})--(\ref{eq:3.4}) 
one has, for the present $\la$,
$$G^\la(s)=W^\la(s)-H^\la(s),$$
where $H^\la(s)$ is holomorphic for $1/2\le\si<1$. Hence
$G^\la(s)$ will also have good boundary behavior. 

(ii) Conversely suppose that $G^\la(s)$ has good
boundary behavior as $\si\searrow 1/2$ for some smooth $E$
and some $\la\in(2m,2m+2]$. Then with this $\la$, the sum
$W^\la(s)$ in (\ref{eq:7.3}) has good boundary behavior. But
we know from the induction hypothesis that 
$$2\sum_{0<2r<2m}\,E(2r/\la)
\Big(D_{2r}(s)-\frac{C_{2r}}{s-1/2}\Big)$$
has good boundary behavior, hence so does the difference
$$2E(2m/\la)\Big(D_{2m}(s)-\frac{C_{2m}}{s-1/2}\Big).$$
Since $E(2m/\la)\ne 0$ this implies the PPC for pairs
$(p,\,p+2m)$.
\end{proof}

\setcounter{equation}{0}   
\section{Proof of Theorem \ref{the:3.4}} 
\label{sec:8}
Assume RH and let $N(T)$ denote the number of zeta's zeros
$\rho=(1/2)+i\ga$ with $0<\ga\le T$. Setting $s=(1/2)+\de$
with $0<\de<1/4$ we will write $\Si^\la_0(\de)
\stackrel{\mathrm{def}}{=}\Si^\la_3(s)$ as an integral.
Recall that $\ga={\rm Im}\,\rho$ and $\ga'={\rm Im}\,\rho'$
in $\Si^\la_3(s)$ have opposite sign. Thus it is convenient
to introduce
\begin{align} & \label{eq:8.1}
F^\la(y,v,\de) = \\ &
\Ga(iy-\de)\Ga(-iv-\de)M^\la\{-2\de+i(y-v)\}
\cosh\{\pi(y+v)/2\}.\notag
\end{align}
Since $F^\la(y,v,\de)=F^\la(-v,-y,\de)$ one finds that
\begin{equation} \label{eq:8.2}
\Si^\la_0(\de)=\Si^\la_3(s)=
2\int\int_{y,\,v>2} F^\la(y,v,\de)dN(y)dN(v).
\end{equation}
For the study of $F^\la(\cdot)$ we use Stirling's uniform
asymptotic formula for $|\arg z|<\pi-\eps$ and $|z|>2$:
\begin{equation} \label{eq:8.3}
\log\Ga(z)=(z-1/2)\log z-z+(1/2)\log(2\pi)+\cal{O}(1/|z|);
\end{equation}
cf.\ Whittaker and Watson \cite{WW27}. It shows that for
$|y|>2$
\begin{align} 
\Ga(iy-\de) & =\sqrt{2\pi}e^{-\pi i({\rm
sgn}\,y)(1+2\de)/4}|y|^{-\de-1/2}
\big[1+\cal{O}(1/|y|)\big]\cdot\notag \\ &
\quad\cdot\exp\{iy\log |y|-iy-\pi|y|/2\}.\notag
\end{align}
This formula will be used also for $\Ga(-iv-\de)$. Thus by
(\ref{eq:8.1}), (\ref{eq:2.8}) and (\ref{eq:2.9}), for
$y,\,v>2$, 
\begin{align} \label{eq:8.4}
& F^\la(y,v,\de) =\pi(yv)^{-\de-1/2}
e^{i(y\log y-y-v\log v +v)}\,\cdot \notag\\
& \;\;\cdot\la^{-2\de+i(y-v)}M\{-2\de+i(y-v)\}
\big[1+\cal{O}(1/y)+\cal{O}(1/v)\big] \\ & 
\ll G^\la(y,v,\de)\stackrel{\mathrm{def}}{=} \la^{-2\de}
(yv)^{-\de-1/2}(|y-v|+1)^{2\de-7/2}.\notag
\end{align}

To complete the proof of Theorem \ref{the:3.4} we establish
\begin{proposition} \label{prop:8.1}
The pole-type behavior of $\Si^\la_0(\de)=\Si^\la_3(s)$ as
$\de\searrow 0$ is the same as that of the reduced function
\begin{align} \label{eq:8.5}
\Si^\la_4(s) = \Si^\la_*(\de) &= 2\pi\int\int_{y,\,v>2;\,
|y-v|<y^{1/2}}\,
y^{-1-2\de+i(y-v)}\,\cdot \notag \\ &\qquad\qquad\cdot
M^\la\{-2\de+i(y-v)\}dN(y)dN(v).
\end{align}
\end{proposition}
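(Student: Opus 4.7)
The plan is to pass from $\Si^\la_0(\de)$ to $\Si^\la_*(\de)$ via three successive approximations, controlling at each stage the discrepancy by an integral that stays bounded as $\de\searrow 0$. Since ``pole-type behavior'' concerns only the singular part at $\de=0$, any bounded error may be discarded.

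First, by the Stirling asymptotic (8.4), I would replace $F^\la(y,v,\de)$ in (8.2) by its main term
$$\tilde F^\la(y,v,\de) = \pi(yv)^{-\de-1/2} e^{i\phi(y,v)}\,\la^{-2\de+i(y-v)}\,M\{-2\de+i(y-v)\},$$
where $\phi(y,v)=y\log y - y - v\log v + v$. Note that $\la^{-2\de+i(y-v)}M\{\cdots\}=M^\la\{-2\de+i(y-v)\}$ by (2.8). The pointwise error is majorized by $(yv)^{-\de-1/2}(|y-v|+1)^{2\de-7/2}\la^{-2\de}(1/y+1/v)$, and with $dN(y)\ll\log y\,dy$ the extra factor $1/y$ (respectively $1/v$) yields absolute convergence of the corresponding double integral uniformly for $\de\in[0,1/4]$; at $\de=0$ the majorant $y^{-3/2}v^{-1/2}(|y-v|+1)^{-7/2}\log y\log v$ is integrable on $\{y,v>2\}$ by handling the bands $|y-v|\le y/2$ and its complement separately.

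Second, I would split the $(y,v)$-integration into a \emph{main region} $|y-v|<y^{1/2}$ and a \emph{tail} $|y-v|\ge y^{1/2}$. The bound (2.9) gives $|M^\la\{-2\de+i(y-v)\}|\ll\la^{-2\de}(|y-v|+1)^{-7/2+2\de}$, so the tail contribution is majorized by $\la^{-2\de}(yv)^{-\de-1/2}(|y-v|+1)^{2\de-7/2}\log y\log v$ integrated under the constraint $|y-v|\ge y^{1/2}$. Using symmetry in $y,v$ and splitting the bands $|y-v|\le y/2$ and $|y-v|>y/2$, one finds that the inner $v$-integral is of order $y^{-9/4}\log y$, so the outer integral converges uniformly as $\de\searrow 0$ and the tail contributes no pole-type behavior.

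Third, in the main region I would approximate the slowly varying factors near the diagonal. Writing $(yv)^{-\de-1/2}=y^{-2\de-1}(v/y)^{-\de-1/2}$ and using $|v-y|<y^{1/2}$ gives $(v/y)^{-\de-1/2}=1+O(y^{-1/2})$; Taylor expansion of $v\log v - v$ about $v=y$ yields $\phi(y,v)=(y-v)\log y + O\{(y-v)^2/y\}$, hence $e^{i\phi(y,v)}=y^{i(y-v)}\{1+O((y-v)^2/y)\}$. Substitution produces precisely $\Si^\la_*(\de)$ (the factor $2\pi$ arises from the $\pi$ in $\tilde F^\la$ combined with the prefactor $2$ of (8.2)) plus an error whose integrand is majorized by $\la^{-2\de}y^{-2\de-2}(y-v)^2(|y-v|+1)^{2\de-7/2}\log^2 y$. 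The inner integral $\int_{|u|<y^{1/2}}u^2(|u|+1)^{-7/2+2\de}\,du$ is uniformly bounded, leaving $\int_2^\infty y^{-2\de-2}\log^2 y\,dy$, which stays bounded as $\de\searrow 0$.

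The main obstacle I expect is the third step: the oscillatory error $O((y-v)^2/y)$ is only $O(1)$ pointwise on $|y-v|<y^{1/2}$, not $o(1)$, so the approximation is not uniformly small. The entire saving must come from the weight $(y-v)^2$ cooperating with the decay $(|y-v|+1)^{-7/2+2\de}$ of the $M^\la$ majorant to produce a convergent two-dimensional integral. Once the three uniform bounds are in hand, $\Si^\la_0(\de)-\Si^\la_*(\de)$ extends continuously to $\de=0$ and the two functions share the same pole-type behavior at $s=1/2$.
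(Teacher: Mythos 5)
Your proposal is correct and follows essentially the same route as the paper: discard the Stirling error terms, show the off-diagonal region contributes a bounded function of $\de$ via the majorant $G^\la(y,v,\de)$, and near the diagonal replace $(yv)^{-\de-1/2}$ by $y^{-2\de-1}$ and the phase by $y^{i(y-v)}$, with the $\cal{O}\{(y-v)^2/y\}$ error absorbed by the decay $(|y-v|+1)^{2\de-7/2}$ of the Mellin factor. Your write-up merely makes explicit the integrability checks that the paper delegates to Lemma \ref{lem:4.2}.
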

\begin{proof}
In the discussion of the integral $I^\la$ of $F^\la$ one may
ignore the quantities $\cal{O}(1/y)$ and $\cal{O}(1/v)$ that
occur in (\ref{eq:8.4}); by Lemma \ref{lem:4.2} they lead
to bounded functions of $\de$. Furthermore, it follows
from the majorization (\ref{eq:8.4}) that the integral
$I^\la_1$ of $|F^\la(y,v,\de)|dN(y)dN(v)$ over the set
$\Om_1$ where $y,\,v>2$ and $|y-v|\ge y^{1/4}$ is bounded on
the interval $\{0<\de<1/4\}$. Indeed, for fixed $\la$, 
\begin{align} 
I^\la_1 &\ll \int\int_{\Om_1}\,G^\la(y,v,\de)
dN(y)dN(v)\notag\\ & \ll \int_2^\infty
y^{-\de-1/2}(\log y)dy\int_{|v-y|\ge
y^{1/4}}\,(|v-y|+1)^{2\de-7/2}(\log v)dv\notag \\ &\ll
\int_2^\infty y^{-\de-1/2}(\log y)\cdot y^{(\de/2)-5/8}(\log
y)dy. \notag
\end{align}

It follows that we may surely restrict ourselves to the part
$I^\la_2$ of the integral in (\ref{eq:8.2}) over the set
$\Om_2$ where $y,\,v>2$ and $|y-v|<y^{1/2}$. On this set the
function
$$v^{-\de-1/2}=y^{-\de-1/2}\{1+(v-y)/y\}^{-\de-1/2}=
y^{-\de-1/2}+\cal{O}(y^{-\de-1})$$
may be replaced by $y^{-\de-1/2}$; by Lemma \ref{lem:4.2}
the error term gives rise to a bounded function of $\de$. We
finally observe that on $\Om_2$
$$y\log y-y-v\log v +v=\int_v^y(\log t)dt=(y-v)(\log y)
+\cal{O}\{|y-v|^2/y\},$$
hence 
$$e^{i(y\log y-y-v\log v +v)}=y^{i(y-v)}\big[1+
\cal{O}\{|y-v|^2/y\}\big].$$
The contribution to $I^\la_2$ due to the final
$\cal{O}$-term is bounded on the interval $\{0<\de<1/4\}$.
Thus as regards pole-type behavior, the function
$\Si^\la_0(\de)$ can be reduced to $\Si^\la_*(\de)$ or
$\Si^\la_4(s)$.
\end{proof}
\begin{remark} \label{rem:8.2}
The pole-type behavior of $\Si^\la_3(s)$ and $\Si^\la(s)$
as $s\searrow 1/2$ is also the same as that of the
symmetric sum
$$2\pi\sum_{\ga,\,\ga';\,|\ga-\ga'|<(\ga\ga')^{1/6}}
(\ga\ga')^{-s+i(\ga-\ga')/2}M^\la\{1-2s+i(\ga-\ga')\}.$$
\end{remark}

\setcounter{equation}{0}   
\section{Pair correlation of zeta's zeros and\\
the conditional Theorem \ref{the:3.6}} 
\label{sec:9}
Goldston, Pintz and Yildirim \cite{GPY05} have
shown conditionally that there are infinitely many prime
pairs $(p,\,p+2r)$ for some $r$ with $2r\le 16$. Their
proof used a hypothesis of Elliott and
Halberstam \cite{EH68} on (weighted) equidistribution of
primes in arithmetic progressions. Below we will discuss the
conditional Theorem \ref{the:3.6} which would imply that
there is an abundance of prime pairs for some difference
$2r$. The proof depends on Hypothesis
\ref{hyp:3.5}, which is supported by Theorem \ref{the:3.4}
and related aspects of the pair correlation of zeta's complex
zeros.

As an introduction we describe some results on the pair
correlation from the now extensive literature. In the
background was a comparison, under RH, of the fine
distribution of zeta's zeros $\rho=(1/2)+i\ga$ to the
eigenvalue distribution of large unitary matrices; see the
LMS Lecture Notes vol.\ 322 \cite{LMS05}. Using an ingenious
computation, Montgomery \cite{Mo73} obtained the following
basic pair-correlation result, cf.\ Goldston and Montgomery
\cite{GM87}:
\begin{theorem} \label{the:9.1} 
Assume RH and set $w(u)=4/(4+u^2)$ so that $w(0)=1$. Then for
$T\to\infty$, uniformly for $\al\in[0,1]$, 
\begin{align} \label{eq:9.1}
F_w(\al,T) & \stackrel{\mathrm{def}}{=}
\frac{2\pi}{T\log T}\sum_{0<\ga,\,\ga'\le
T}\,e^{i\al(\ga-\ga')\log T}w(\ga-\ga')\notag\\
&=\{1+o(1)\}T^{-2\al}\log T+\al+o(1).
\end{align}
\end{theorem}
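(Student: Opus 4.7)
My plan follows Montgomery's original strategy: convert the pair-correlation sum into a mean-square integral that can be evaluated via an explicit formula relating zeta's zeros to prime powers, then extract the main term using the prime number theorem.

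The weight $w(u) = 4/(4+u^2)$ is dictated by the identity
\begin{equation*}
\int_{-\infty}^\infty \frac{dt}{[1+(t-\gamma)^2]\,[1+(t-\gamma')^2]} = \frac{\pi}{2}\,w(\gamma-\gamma').
\end{equation*}
With $x = T^\alpha$, I introduce the kernel
\begin{equation*}
R(x,t) = \sum_\gamma \frac{x^{i\gamma}}{1+(t-\gamma)^2},
\end{equation*}
expand $|R(x,t)|^2$ and integrate over $t \in [0,T]$. Using the identity above --- with boundary terms from $\gamma$ or $\gamma'$ outside $(0,T)$ absorbed into $o(T\log T)$ --- this mean square equals a fixed constant times $T\log T \cdot F_w(\alpha,T)$ up to lower order.

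The second ingredient is Montgomery's explicit formula, obtained under RH by applying the residue theorem to a contour integral of $\zeta'(s)/\zeta(s)$ against a suitable Poisson-type kernel: for $t \ge 1$,
\begin{equation*}
R(x,t) = -x^{-1/2}\!\!\sum_{n \le x} \Lambda(n)(n/x)^{-it} P_1(x/n) - x^{-1/2}\!\!\sum_{n > x} \Lambda(n)(n/x)^{-it} P_2(x/n) + E(x,t),
\end{equation*}
where $P_1, P_2$ are explicit elementary rational factors and $E(x,t) = O(x^{-1/2}\log(|t|+2))$ absorbs the trivial zeros together with the pole of $\zeta$ at $s = 1$. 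Substituting this into $\int_0^T |R(x,t)|^2 \, dt$, the diagonal $n = m$ in the expanded prime double sum --- via $\int_0^T (n/m)^{it}\, dt = T\delta_{n,m} + O(|\log(n/m)|^{-1})$ --- evaluates by PNT to a quantity of order $\alpha T \log T$, producing the linear-in-$\alpha$ term. The cross terms between $E(x,t)$ (which carries an $x^{1/2}$ tail from the pole of $\zeta$) and the prime sums contribute the $T^{-2\alpha}\log T$ piece upon squaring.

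The principal obstacle is the off-diagonal estimate: bounding the $n \ne m$ contribution of $\int_0^T \bigl|\sum_n \Lambda(n)(n/x)^{-it} P_j(x/n)\bigr|^2\, dt$ by $o(T \log T)$ \emph{uniformly} in $\alpha \in [0,1]$. Montgomery's key move is to invoke Gallagher's lemma, which converts the Dirichlet-polynomial mean square into a short-interval sum of $\Lambda$ controllable by the PNT with an explicit error. This uniformity breaks down for $\alpha > 1$, where a conjectural "strong" form of the pair-correlation is needed --- exactly the regime underlying Hypothesis \ref{hyp:3.5} and Theorem \ref{the:3.6} of the present paper.
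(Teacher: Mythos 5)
The paper gives no proof of Theorem \ref{the:9.1}: it is quoted as Montgomery's classical result \cite{Mo73} (cf.\ \cite{GM87}, \cite{Go05}) to motivate Section \ref{sec:9}, so your sketch can only be measured against Montgomery's own argument, which is indeed the one you are reconstructing. Your skeleton is correct: the convolution identity for $w$, the kernel $R(x,t)$ with $x=T^{\al}$, the explicit formula under RH, and the mean square over $t\in[0,T]$. But two steps as written would fail. The more serious one concerns the origin of the term $\{1+o(1)\}T^{-2\al}\log T$. It is \emph{not} produced by ``cross terms between $E(x,t)$ and the prime sums''; it is the square of the explicit non-prime part of Montgomery's formula, which reads
\begin{equation*}
x^{-1+it}\bigl(\log(|t|+2)+\cal{O}(1)\bigr)+\cal{O}\bigl(x^{1/2}(|t|+2)^{-1}\bigr),
\end{equation*}
and which must be carried as a \emph{main} term: $\int_0^T x^{-2}\log^2(|t|+2)\,dt\sim x^{-2}T\log^2T$, and after the factor $\pi/2$ from the convolution identity and the normalization $2\pi/(T\log T)$ this is exactly $T^{-2\al}\log T$. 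By compressing this into an error term bounded by $\cal{O}(x^{-1/2}\log(|t|+2))$ you both misstate its size and lose the main term --- an $\cal{O}$-bound can never generate an asymptotic equality --- and the ``cross terms'' you then invoke to recover it are precisely the terms that must instead be shown (by Cauchy--Schwarz between the two pieces) to be absorbable into the $1+o(1)$ and the $o(1)$.

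The second gap is the tool for the off-diagonal terms. For $\al\in[0,1]$ Montgomery does not use Gallagher's lemma or the prime number theorem in short intervals; that machinery belongs to the later Goldston--Montgomery equivalence \cite{GM87} between the behavior of $F_w(\al,T)$ near $\al=1$ and the variance of $\psi(x+h)-\psi(x)$, i.e.\ to the \emph{converse} direction. What is needed here is the mean value theorem for Dirichlet polynomials, $\int_0^T\bigl|\sum_n a_nn^{-it}\bigr|^2dt=\sum_n|a_n|^2\{T+\cal{O}(n)\}$, applied to the two prime sums with coefficients $\La(n)n^{1/2}/x$ ($n\le x$) and $\La(n)x/n^{3/2}$ ($n>x$); its diagonal gives $\al+o(1)$ via $\sum_{n\le x}\La^2(n)n\sim\tfrac12x^2\log x$, and its $\cal{O}(n)$ error is what confines the unconditional range to $\al\le1$. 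With the pole term restored as a main term and the mean value theorem in place of Gallagher's lemma, your outline becomes Montgomery's proof; as it stands, neither of the two summands on the right of (\ref{eq:9.1}) is actually derived.
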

One may speculate that (\ref{eq:9.1}) holds for many other
weight functions $w$ with $w(0)=1$; cf.\  Hejhal \cite{He94},
Rudnick and Sarnak \cite{RS94}, \cite{RS96}, 
Bogomolny and Keating \cite{BK95}. Furthermore, Montgomery
used the PPC to support the conjecture that, uniformly
for $1\le\al\le C$, 
\begin{equation} \label{eq:9.2}
F_w(\al,T)=1+o(1)\quad\mbox{as}\;\;T\to\infty.
\end{equation}
This conjecture would imply that almost all of zeta's
complex zeros are simple: $N_s(T)\sim N(T)\sim(T\log
T)/(2\pi)$. It also implies that the behavior of
$F_w(\al,T)$ for $\al\ge 1$ is determined largely by the
terms in the double sum of (\ref{eq:9.1}) for which
$\ga'=\ga$; the terms with $\ga'\ne\ga$ would essentially
cancel each other. To visualize the exponentials
$e^{i\al(\ga-\ga')\log T}$ on the unit circle, observe that
the mean spacing of zeta's zeros $(1/2)+i\ga$ for
$\ga$ near $T$ is approximately $2\pi/\log T$. 

See also the subsequent work by Gallagher and Mueller
\cite{GaM78}, Heath-Brown \cite{HB82}, Gallagher
\cite{Ga85}, Goldston \cite{Go88}, Goldston and Gonek
\cite{GG90}, \cite{GG98}, Goldston, Gonek,
\"{O}zl\"{u}k and Snyder \cite{GGOS00}, Montgomery and 
Soundararajan \cite{MS04}, Chan \cite{Cha03}, \cite{Cha04a},
\cite{Cha04b}, and Goldston \cite{Go05}.

\smallskip
Always assuming RH, it is interesting to compare the case
$\al=1$ of Montgomery's result and the case $\la=1$ of
Theorem \ref{the:3.4}. By (\ref{eq:9.1})
$$2\pi\sum_{0<\ga,\,\ga'\le
T}\,e^{i(\ga-\ga')\log T}w(\ga-\ga')\sim T\log
T\quad\mbox{as}\;\;T\to\infty,$$
while by (\ref{eq:7.1}) (since $V^1(s)=0$) and
Theorem \ref{the:3.4} (with $2s=1+\de$) 
$$2\pi\sum_{\ga,\,\ga';\,|\ga-\ga'|<\ga^{1/2}}\,
\ga^{-1-\de+i(\ga-\ga')}M^\la\{-\de
+i(\ga-\ga')\}\sim 1/\de^2$$
as $\de\searrow 0$. It appears that in first approximation,
the behavior of the second sum is also determined by the
terms with $\ga'=\ga$:
$$2\pi\int_2^\infty
y^{-1-\de}M^\la(-\de)dN(y)\sim\int_2^\infty y^{-1-\de}(\log
y)dy\sim 1/\de^2.$$
 
\begin{proof}[Support for Hypothesis \ref{hyp:3.5}]
Using Theorems \ref{the:3.1} and \ref{the:3.4} with
$s=(1/2)+\de$, and writing $\Si^\la_4(s)=\Si^\la_*(\de)$ as
in (\ref{eq:8.5}), we will now consider the difference
\begin{align} \label{eq:9.3}
\Si^\la_*(\de)-\Si^1_*(\de) &=  2\pi
\sum_{\ga,\,\ga';\,|\ga-\ga'|<\ga^{1/2}}\,
\big\{\la^{-2\de+i(\ga-\ga')}-1\big\}\cdot \notag \\ &
\qquad\qquad\cdot\ga^{-1-2\de+i(\ga-\ga')}M\{-2\de
+i(\ga-\ga')\}.
\end{align}
For $\la=2$ it follows from Theorem
\ref{the:3.1} (since $V^2(s)=0$) that
$$\Si^2_*(\de)-\Si^1_*(\de)=-\frac{A^E}{\de}+\cal{O}(1)
\quad\mbox{as}\;\;\de\searrow 0.$$
Compared to the original sum $\Si^1_*(\de)$, the general term
in (\ref{eq:9.3}) now contains an additional factor
$2^{-2\de+i(\ga-\ga')}-1$. For small $\de$ and $\ga-\ga'$,
this factor is like
$\{-2\de+i(\ga-\ga')\}\log 2$. If one may ignore the
contribution due to larger $|\ga-\ga'|$, the effect of
the factor will be roughly 
$$\{-2\de(\log 2)/(4\de^2)+\,\mbox{contribution of}\;\;
i(\ga-\ga')(\log 2)/(4\de^2).$$
We know that the new pole is $-A^E/\de$, hence the second
contribution must also result in a first order pole with
modest residue.

In the case of general $\la$ the effect of the factor
$$\la^{-2\de+i(\ga-\ga')}-1\approx \{-2\de+i(\ga-\ga')\}\log
\la$$
might well be a first order pole with residue of order
$\log\la$; cf.\ also (\ref{eq:2.6}) and (\ref{eq:1.6}). Thus
Hypothesis
\ref{hyp:3.5} appears to be  plausible.
\end{proof}
\begin{proof}[Proof of Theorem \ref{the:3.6}] For given
$\eps\in(0,1)$ we form a smooth sieving function
$E^\la(\nu)=E(\nu/\la)$ (as in Section \ref{sec:2}) such
that
\begin{equation} \label{eq:9.4}
A^E=\int_0^1 E(\nu)d\nu > 1-\eps/3.
\end{equation}
For $\la>2$ and $s\in(1/2,1)$ we now use the
final representation for $V^\la(s)$ in Theorem \ref{the:3.1}:
\begin{align} \label{eq:9.5}
V^\la(s) &= 2\sum_{0<2r\le\la}\,E(2r/\la)D_{2r}(s)
\notag \\ &= \frac{A^E(\la-1)}{s-1/2}+\Si^\la(s)-\Si^1(s)+
H^\la(s),
\end{align}
where $H^\la(s)$ is holomorphic for $1/2\le\si<1$. Thus by
Hypothesis \ref{hyp:3.5}
\begin{equation} \label{eq:9.6}
\limsup_{s\searrow
1/2}\,(s-1/2)V^\la(s)=A^E(\la-1)+\om(\la),
\end{equation}
where $\om(\la=o(\la)$ as $\la\to\infty$. Hence by
(\ref{eq:9.4}), the right-hand side of (\ref{eq:9.6}) will
be greater than $(1-\eps/2)\la$ for all sufficiently large
$\la$. We choose the smallest even positive integer
$\la=2m$ for which this is so. Since $0\le E(\nu)\le 1$, 
it then follows from (\ref{eq:9.5}) that
\begin{equation} \label{eq:9.7}
\limsup_{s\searrow
1/2}\,(s-1/2)\sum_{r\le m}\,D_{2r}(s)>(1-\eps/2)m.
\end{equation}
We will show that this implies
\begin{equation} \label{eq:9.8}
\limsup_{x\to\infty}\,(1/x)\sum_{r\le m}\,\psi_{2r}(x)>
C=(2-\eps)m.
\end{equation}

Suppose to the contrary that
$(1/x)\sum_{r\le m}\,\psi_{2r}(x)\le C$ for all $x\ge x_0\ge
1$. Now by (\ref{eq:2.3}) and (\ref{eq:2.1}) for
$s\in(1/2,1)$,
\begin{align} 
D_{2r}(s) &= \int_1^\infty x^{-s}(x+2r)^{-s}d\psi_{2r}(x)
\le \int_{x_0}^\infty x^{-2s}d\psi_{2r}(x)+\cal{O}(1)
\notag \\ &\le 2s\int_{x_0}^\infty
x^{-2s-1}\psi_{2r}(x)dx+\cal{O}(1).\notag
\end{align}
Hence it would follow that
\begin{align} & \label{eq:9.9}
\sum_{r\le m}\,D_{2r}(s)\le 2s\int_{x_0}^\infty
x^{-2s-1}\sum_{r\le m}\,\psi_{2r}(x)dx+\cal{O}(1) \notag\\
&\le 2s\int_{x_0}^\infty Cx^{-2s}dx+\cal{O}(1)
\le 2sC/(2s-1)+\cal{O}(1).
\end{align}
As a result the upper residue in (\ref{eq:9.7}) would
be $\le C/2$, hence $\le(1-\eps/2)m$. This contradiction
proves (\ref{eq:9.8}).

In order to pass from (\ref{eq:9.8}) to (\ref{eq:3.7}) one
may appeal to the discussion in Section \ref{sec:2}.
\end{proof}

\setcounter{equation}{0} 
\section{Positivity of sums $\Si^\la_2(s)$ and \\
conditional abundance of prime pairs} \label{sec:10}
We will verify the positivity of the double sums
$\Si^\la_2(s)$ in (\ref{eq:3.1}) for $1/2<s<1$ when $\hat
E^\la(t)\ge 0$.  
\begin{proof}[Proof of Proposition \ref{prop:3.7}]
It will be convenient to replace $\rho'$ in the double sum 
$\Si^\la_2(s)$ by $\overline\rho'$. Set
$\Om_R(t,s)=\Om'_R(t,s)+\Om''_R(t,s)$, where
$$ \Om'_R(t,s)= \sum_{|{\rm Im}\,\rho|,\,|{\rm
Im}\,\rho'|<R}\,\Ga(\rho-s)
\Ga(\overline\rho'-s)t^{2s-\rho-\overline\rho'}
\cos(\pi\rho/2)\cos(\pi\overline\rho'/2),$$
and $\Om''_R(t,s)$ is the corresponding function with $\sin$
instead of $\cos$. Then
$$\Om'_R(t,s)=\bigg|\sum_{|{\rm Im}\,\rho|<R}
\Ga(\rho-s)t^{s-\rho}\cos(\pi\rho/2)\bigg|^2\ge 0,$$
and similarly for $\Om''_R(t,s)$. Hence by Proposition
\ref{prop:7.1} and (\ref{eq:2.7})
$$\Si^\la_2(s)=\frac{1}{\pi}\lim_{R\to\infty}\,\int_0^\infty
\hat E^\la(t)\Om_R(t,s)dt\ge 0.$$
\end{proof}

One may use Proposition \ref{prop:3.7} to derive another
conditional abundance result:
\begin{theorem} \label{the:10.2}
Suppose that for certain positive integers
$m_1<m_2<\cdots<m_k$ there are a constant $c>0$ and a
sequence $S$ of numbers $\la\to \infty$, such that for
$\la\in S$ and sufficiently large $x$,
say $x\ge x_1=x_1(\la)$ with $\log x_1(\la)=o(\la)$, one has
\begin{equation} \label{eq:10.1}
\frac{1}{k}\,\sum_{j=1}^k\,\pi_{2m_j}(x)\ge
c\cdot\frac{2}{\la}\sum_{0<2r\le\la}\,\pi_{2r}(x).
\end{equation}
Then 
\begin{equation} \label{eq:10.2}
\limsup_{x\to\infty}\,\frac{1}{k}\,\sum_{j=1}^k\,
\frac{\pi_{2m_j}(x)}{x/\log^2 x}\ge c.
\end{equation}
\end{theorem}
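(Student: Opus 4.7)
The plan is to turn hypothesis (10.1) into (10.2) by establishing an unconditional lower bound of shape $(1+o(1))\,x/\log^{2}x$ for the average $\frac{2}{\lambda}\sum_{0<2r\le\lambda}\pi_{2r}(x)$, valid along the sequence $\lambda\in S$, $x=x_{1}(\lambda)$ where by hypothesis $\log x=o(\lambda)$. Once this is in place, substituting into (10.1) and letting $\lambda\to\infty$ through $S$ (so that $x=x_{1}(\lambda)\to\infty$) yields (10.2) immediately.

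\medskip

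\emph{Rewriting the sum.} For any odd prime $p$, every prime $q\in(p,p+\lambda]$ is odd and hence $q-p$ is even; the pairs with $p=2$ contribute only $O(1)$. Consequently
$$\sum_{0<2r\le\lambda}\pi_{2r}(x)=\sum_{p\le x,\,p\,\text{prime}}\bigl(\pi(p+\lambda)-\pi(p)\bigr)+O(1),$$
and the analogous identity with $\Lambda$-weights reads
$$\sum_{0<2r\le\lambda}\psi_{2r}(x)=\sum_{n\le x}\Lambda(n)\bigl(\psi(n+\lambda)-\psi(n)\bigr)+O(\lambda\log^{2}x),$$
the error absorbing powers of $2$ and the $\psi_{2r}-\theta_{2r}$ discrepancy.

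\medskip

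\emph{Mean-value estimate.} The heart of the argument is a Bombieri--Davenport-type identity
$$\sum_{0<2r\le\lambda}\psi_{2r}(x)=\lambda x\bigl(1+o(1)\bigr)\qquad(\lambda/\log x\to\infty),$$
which is the counting-function analog of the arithmetic mean-value formula (1.6). Writing $\psi(n+\lambda)-\psi(n)=\lambda+R(n,\lambda)$, the main term yields $\lambda\psi(x)\sim\lambda x$ by the prime number theorem, while the remainder $\sum_{n\le x}\Lambda(n)R(n,\lambda)$ is controlled via Cauchy--Schwarz against a Selberg-type mean-square bound of the form $\int_{1}^{x}|R(y,\lambda)|^{2}\,dy=o(\lambda^{2}x)$ for $\psi$ in short intervals. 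One then transfers the bound to $\pi_{2r}$ using $\theta_{2r}(x)\le(\log x)^{2}\pi_{2r}(x)$ and the negligibility of the prime-power discrepancy $\psi_{2r}-\theta_{2r}=O(\sqrt{x}\log^{2}x)$, obtaining
$$\sum_{0<2r\le\lambda}\pi_{2r}(x)\ge(1+o(1))\,\frac{\lambda x}{\log^{2}x},\qquad \frac{2}{\lambda}\sum_{0<2r\le\lambda}\pi_{2r}(x)\ge(1+o(1))\,\frac{x}{\log^{2}x}.$$

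\medskip

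\emph{Conclusion and obstacle.} Inserting the last estimate into (10.1) at $x=x_{1}(\lambda)$, valid for $\lambda\in S$ large enough so that $\log x=o(\lambda)$, gives $\frac{1}{k}\sum_{j=1}^{k}\pi_{2m_{j}}(x)\ge c(1+o(1))\,x/\log^{2}x$ along the sequence $x=x_{1}(\lambda)\to\infty$, so (10.2) follows. The main obstacle is the unconditional verification of the mean-value estimate in exactly the range singled out by the hypothesis $\log x_{1}(\lambda)=o(\lambda)$: this calibration is precisely tuned so that the known short-interval estimates for $\psi$ suffice, but checking that the best available unconditional bounds do the job is the delicate point.
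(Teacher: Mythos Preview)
Your approach is fundamentally different from the paper's and contains a genuine gap that you yourself flag but do not resolve. The estimate
\[
\sum_{0<2r\le\lambda}\psi_{2r}(x)=\lambda x\bigl(1+o(1)\bigr)\qquad(\lambda/\log x\to\infty)
\]
is \emph{not} known unconditionally. Via the rewriting $\sum_{n\le x}\Lambda(n)\bigl(\psi(n+\lambda)-\psi(n)\bigr)$ and Cauchy--Schwarz, it would require a mean-square bound of the shape $\int_1^x|\psi(y+\lambda)-\psi(y)-\lambda|^2\,dy=o(\lambda^2 x/\log x)$ for $\lambda$ only slightly larger than $\log x$. Under RH Selberg's estimate gives something in this direction, but unconditionally the best zero-density results only handle $\lambda\ge x^{\theta}$ for some fixed $\theta>0$ (Huxley's $\theta=1/6+\varepsilon$, say). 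The hypothesis $\log x_1(\lambda)=o(\lambda)$ allows $\lambda$ as small as $(\log x)^{1+\varepsilon}$, which is far out of reach. Your closing remark that ``the known short-interval estimates for $\psi$ suffice'' is simply not correct; this is precisely the point at which your argument breaks down. (There is also a factor-of-$2$ slip in your displayed conclusion, but that is secondary.)

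The paper circumvents this obstacle entirely. Instead of a counting-function mean value, it uses the analytic representation of Theorem~\ref{the:3.1} together with the positivity $\Sigma^\lambda_2(s)\ge 0$ of Proposition~\ref{prop:3.7}. Dropping the nonnegative double sum yields the inequality
\[
\frac{2}{\lambda}\sum_{0<2r\le\lambda}\delta\,D_{2r}\Bigl(\tfrac12+\delta\Bigr)\ge \tfrac12-\frac{1}{4\lambda\delta}-\mathcal{O}(\delta\log\lambda),
\]
which serves as a Dirichlet-series substitute for the mean-value estimate you wanted. The hypothesis~\eqref{eq:10.1} (valid for all $x\ge x_1(\lambda)$, with $\log x_1(\lambda)=o(\lambda)$ controlling the truncation error) is then fed in at the level of the $D_{2r}$, and one lets $\lambda\to\infty$ through $S$ and $\delta\searrow 0$ with $\lambda\delta$ of fixed size to reach~\eqref{eq:10.2} via the Tauberian step from the proof of Theorem~\ref{the:3.6}. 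The positivity of $\Sigma^\lambda_2$ is exactly what replaces the unavailable short-interval input.
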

There is both heuristic and numerical support for the
hypothesis of the theorem. The proof below makes use of the
sieving function $E^\la(\nu)=E^\la_F(\nu)$. Although it does
not satisfy the smoothness requirement imposed in Section
\ref{sec:2}, one can show that it may be used anyway; it
gives a better result here than $E^\la_J$. We plan to
return to the details later; cf.\ also \cite{Ko07}.
\begin{proof}[Brief indication of the proof] It suffices to
treat the case $k=1$, the general case being similar; we
write $m_k=m$. Now Theorem \ref{the:3.1} with $E=E_F$, so that
$A^E=1/2$, the decomposition $\Si^\la(s)=\Si^\la_1(s)+\Si^\la_2(s)$
and Proposition \ref{prop:3.7} imply the following inequality
for $1/2<s<3/4$:
$$ 2\sum_{0<2r\le\la}\,E(2r/\la)
D_{2r}(s) \ge -\frac{1/4}{(s-1/2)^2}+
\frac{\la/2}{s-1/2}+\Si^\la_1(s)-\cal{O}(\la\log\la).$$
Here $0\le E(2r/\la)\le 1$ and the sum $\Si^\la_1(s)$ of the
first two terms in (\ref{eq:3.1}) is $\cal{O}(\la^{1/2})$.
Setting $s-1/2=\de$ it follows that
\begin{equation} \label{eq:10.3}
\frac{2}{\la}\sum_{0<2r\le\la}\,\de
D_{2r}\{(1/2)+\de\}\ge
\frac{1}{2}-\frac{1}{4\la\de}-\cal{O}(\de\log\la).
\end{equation}
Combining this with the hypothesis of the theorem,
appropriate estimates show that 
\begin{equation} \label{eq:10.4} 
\de D_{2m}\{(1/2)+\de\}
\ge c\bigg(\frac{1}{2}-\frac{1}{4\la\de}\bigg)
-\cal{O}\{\de\log(x_1(\la)\}.
\end{equation}
For given $\eps\in(0,1/2)$ we now choose $\la\to\infty$ in $S$ and 
$\de\searrow 0$ in $(0,1/4)$ such that $1/(4\la\de)=\eps/2$. Since
$\log x_1(\la)=o(\la)$ one may conclude that
\begin{equation} \label{eq:10.5}
\limsup_{\de\searrow 0}\,\de
D_{2m}\{(1/2)+\de\}\ge(1-\eps)c/2.
\end{equation}
From here on one may argue as in the proof of Theorem
\ref{the:3.6}.
\end{proof}

\bigskip

\noindent{\scshape KdV Institute of Mathematics, University of 
Amsterdam, \\
Plantage Muidergracht 24, 1018 TV Amsterdam, Netherlands}

\noindent{\it E-mail}: {\tt korevaar@science.uva.nl}

\end{document}